%%%%%%%%%%%%%%%%%%%%%%%%%%%%%%%%%%%%%%%%%%%%%%%%%%%%%%%%%%%%%%%%%%%%%%%%%%%%%%%%%%%%%%%     Generalized Gorenstein Arf frings  %%%%%%%%%%%%%%%%%%%%%%%%%%%%%%%%%%%%%%%%%%%%%%%%%%%%%%%%%%%%%%%%%%%%%%%%%%%%%%%%%%%%%%%%%%%%%%%%%%%%%%%%%%%%

\documentclass{amsart}

%\documentclass[a4,12pt]{amsart}
%%%%%%%%%%%%%%%%%%%%%%%%%%%%%%%%%%%%%%%%%%%%%%%%%%%%%%%%
%\oddsidemargin 0mm
%\evensidemargin 0mm
%\topmargin 0mm
%\textwidth 160mm
%\textheight 230mm
%\tolerance=9999
%%%%%%%%%%%%%%%%%%%%%%%%%%%%%%%%%%%%%%%%%%%%%%%%%%%%%%%%
\usepackage{amssymb,amstext,amsmath,amscd,amsthm,amsfonts,enumerate,latexsym}
\usepackage{color}
\usepackage[dvipdfmx]{graphicx}
\usepackage[all]{xy}
\usepackage{extarrows}

\usepackage{ytableau}

\usepackage{array}
\usepackage{xcolor}
\usepackage[leqno]{amsmath}
%\usepackage{mathptmx}

%%%%%%%%%%%%%%%%%%%%%%%%%%%%%%%%%%%%%%%%%%%%%%%%%%%%%%%%%%%%%%%%%%%%%
\theoremstyle{plain}
\newtheorem{thm}{Theorem}[section]
\newtheorem*{thm*}{Theorem}
\newtheorem*{cor*}{Corollary}

\newtheorem{prop}[thm]{Proposition}

\newtheorem{cor}[thm]{Corollary}

\newtheorem*{claim*}{Claim}

\theoremstyle{definition}
\newtheorem{defn}[thm]{Definition}
\newtheorem{ex}[thm]{Example}

\newtheorem{notation}[thm]{Notation}
\newtheorem{setup}[thm]{Setup}

\theoremstyle{remark}

\numberwithin{equation}{thm}
%%%%%%%%%%%%%%%%%%%%%%%%%%%%%%%%%%%%%%%%%%%%%%%%%%%%%%%%%%%%%%%%%%%

%%%%%%%%%%%%%%%%%%%%%%%%%%%%%%%%%%%%%%%%%%%%%%%%%%%%%
\def\Hom{\mathrm{Hom}}
\def\Max{\mathrm{Max}}

\def\e{\mathrm{e}}
\def\m{\mathfrak m}
\def\M{\mathcal {M}}
\def\N{\mathcal {N}}
\def\n{\mathfrak n}

\def\q{\mathfrak q}

\def\K{\mathrm{K}}

\newcommand{\ed}{\operatorname{embdim}}

\newcommand{\rme}{\mathrm{e}}

\newcommand{\rmK}{\mathrm{K}}

\newcommand{\rmQ}{\mathrm{Q}}

\newcommand{\calF}{\mathcal{F}}

\newcommand{\fka}{\mathfrak{a}}

\newcommand{\fkc}{\mathfrak{c}}

\newcommand{\numberseries}{\bfseries}   %Fontseries used for numbering

\newlength{\thmtopspace}                %Space above theorem
\newlength{\thmbotspace}                %Space below theorem
\newlength{\thmheadspace}               %Space after theorem label
\newlength{\thmindent}                  %For indenting

\setlength{\thmtopspace}%
{0.7\baselineskip plus 0.35\baselineskip minus 0.2\baselineskip}
\setlength{\thmbotspace}%
{0.45\baselineskip plus 0.15\baselineskip minus 0.1\baselineskip} 
\setlength{\thmheadspace}{0.5em}
\setlength{\thmindent}{0pt}     

\newtheoremstyle{fixed bf head,slanted body}
                {\thmtopspace}{\thmbotspace}{\slshape}
                {\thmindent}{\bfseries}{.}{\thmheadspace}
                {{\numberseries \thmnumber{#2\;}}\thmname{#1}\thmnote{ (#3)}}

\newtheoremstyle{variable bf head,slanted body}
                {\thmtopspace}{\thmbotspace}{\slshape}
                {\thmindent}{\bfseries}{.}{\thmheadspace}
                {{\numberseries \thmnumber{#2\;}}\thmname{#1}\thmnote{ #3}}

\newtheoremstyle{fixed bf head,upright body}
                {\thmtopspace}{\thmbotspace}{\upshape}
                {\thmindent}{\bfseries}{.}{\thmheadspace}
                {{\numberseries \thmnumber{#2\;}}\thmname{#1}\thmnote{ (#3)}}

\newtheoremstyle{numbered paragraph}
                {\thmtopspace}{\thmbotspace}{\upshape}
                {\thmindent}{\upshape}{}{\thmheadspace}
                {{\numberseries \thmnumber{#2.}}}

\newcommand{\Frac}{\operatorname{Frac}}

%%%%%%%%%%%%%%%%%%%%%%%%%%%%%%%%%%%%%%%%%%%%%%%%%%%%%%%%%%%%%

\begin{document}

\setlength{\baselineskip}{13pt}
%%%%%%%%%%%%%%%%%%%%%%%%%%%%%%%%%%%%%%%%%%%%%%%%%%%%%%%%%%%%%

\title{Generalized Gorenstein Arf rings}

\author[Celikbas, Celikbs, Goto, Taniguchi]
{Ela Celikbas, Olgur Celikbas, \\ Shiro Goto, and Naoki Taniguchi}

\address{Ela Celikbas \\
Department of Mathematics \\
West Virginia University\\
Morgantown, WV 26506-6310 USA}
\email{ela.celikbas@math.wvu.edu}

\address{Olgur Celikbas \\
Department of Mathematics \\
West Virginia University\\
Morgantown, WV 26506-6310 USA}
\email{olgur.celikbas@math.wvu.edu}

\address{Shiro Goto \\
Department of Mathematics\\
School of Science and Technology\\
Meiji University, 1-1-1 Higashi-mita, Tama-ku, Kawasaki 214-8571, Japan}
\email{shirogoto@gmail.com}

\address{Naoki Taniguchi\\
Global Education Center\\
Waseda University\\
1-6-1 Nishi-Waseda, Shinjuku-ku, Tokyo 169-8050, Japan}
\email{naoki.taniguchi@aoni.waseda.jp}
\urladdr{http://www.aoni.waseda.jp/naoki.taniguchi/}

\thanks{2010 {\em Mathematics Subject Classification.} 13H10, 13H15, 13A30}

\thanks{{\em Key words.} Arf rings, generalized Gorenstein local rings, almost Gorenstein local rings}

\maketitle

%%%%%%%%%%%%%%%%%%%%%%%%%%%%%%%%%%%%%%%%%%%%%%%%%%%%%%%%%%%%
%%%%%%%%%%%%%%%%%%%%%%%%%%%%%%%%%%%%%%%%%%%%%%%%%%%%%%%%%%%%

\begin{abstract} 
In this paper we study generalized Gorenstein Arf rings; a class of one-dimensional Cohen-Macaulay local Arf rings that is strictly contained in the class of Gorenstein rings. We obtain  new characterizations and examples of Arf rings, and give applications of our argument to numerical semigroup rings and certain idealizations. In particular, we generalize a beautiful result of Barucci and Fr\"oberg concerning Arf numerical semigroup rings.
\end{abstract}

%%%%%%%%%%%%%%%%%%%%%%%%%%%%%%%%%%%%%%%%%%%%%%%%%%%%%%%%%%%%
%%%%%%%%%%%%%%%%%%%%%%%%%%%%%%%%%%%%%%%%%%%%%%%%%%%%%%%%%%%%%%%%%%%%%%%%%%%%%%%%%%%%%%%%%%%%%%%%%%%%%%%%%%%%%%%%%%%%%%%%
%%%%%%%%%%%%%%%%%%%%%%%%%%%%%%%%%%%%%%%%%%%%%%%%%%%%%%%%%%%%%%%%%%%%%%%%%%%%%%%%%%%%%%%%%%%%%%%%%%%%%%%%%%%%%%%%%%%%%%%%
%%%%%%%%%%%%%%%%%%%%%%%%%%%%%%%%%%%%%%%%%%%%%%%%%%%%%%%%%%%%%%%%%%%%%%%%%%%%%%%%%%%%%%%%%%%%%%%%%%%%%%%%%%%%%%%%%%%%%%%%
%%%%%%%%%%%%%%%%%%%%%%%%%%%%%%%%%%%%%%%%%%%%%%%%%%%%%%%%%%%%

\section{Introduction}
In 1971 Lipman \cite{Lipman} proved that, if $(R, \m)$ is a complete, one-dimensional local domain with an algebraically closed field of characteristic zero, and $R$ is saturated (as defined by Zariski in \cite{Zariski}), then $R$ has minimal multiplicity, i.e., the embedding dimension of $R$ equals the multiplicity of $R$.  Lipman's proof based on the fact that such a ring $R$ is an \emph{Arf} ring, i.e., $R$ satisfies a certain condition that was studied by Arf \cite{Arf} in 1949 pertaining to a certain classification of curve singularities; see, for example, the survey papers \cite{Sertoz1}, \cite{Sertoz2} and also Du Val's work \cite{Duval} for details. As Lipman \cite{Lipman} pointed out, the defining condition of an Arf ring is technical, but it is convenient to work with, as well as, easy to state: if $R$ is as above, then $R$ is Arf provided that, $yz/x \in R$ whenever $0\neq x \in \m$ and $y/x, z/x \in \Frac(R)$ (the field of fractions of $R$) are integral elements over $R$.

Examples of Arf rings are abundant. For example, $R$ is Arf if the multiplicty of $R$ is at most two. 
As Arf property is preserved by standard procedures in ring theory, such as completion, it is not difficult to construct examples of Arf rings; see, for example, \cite[2.5 and 2.7]{Lipman}. 

Arf property for numerical semigroup rings, as well as algorithms to compute the Arf ring closure of various rings, such as the coordinate rings of curves, were already studied in the literature; see, for example, \cite{AS, BF, RGGB}. However, to our best knowledge, a homological characterization of Arf rings in local algebra -- besides Lipman's beautiful work -- is yet to be given. The main purpose of this paper, rather than seeking fast algorithms to compute the Arf closure, is to initiate a homological investigation, and to attempt to motivate the interested reader to study further in this direction. Although it has seemingly a non-homological definition, Arf rings enjoy important homological properties: if $R$ is Arf, then it has minimal multiplicity so that it is \emph{Golod}, a class of (local) rings which is of active research interest; see, for example, \cite[5.2.8]{Lucho}. This particular property of Arf rings naturally raises the following question: if $R$ has minimal multiplicity, then, under what conditions, $R$ is an Arf ring?
In this paper we are able to give an answer to this question and obtain new characterizations of a class of Arf rings. Our main result is:

\begin{thm}\label{intro1.2} Let $R$ be a one-dimensional generalized Gorenstein local ring with a canonical ideal $I$ which contains the parameter ideal $(a)$ as a reduction. Set $S=\left\{\frac{x}{a} ~\middle|~ x \in I\right\}\subseteq \rmQ(R)$, where $\rmQ(R)$ is the total quotient ring of $R$. Then $R$ is Arf if and only if $R$ has minimal multiplicity and the multiplicity of  $S_{\mathcal{M}}$ is at most two for each maximal ideal $\mathcal{M}$ of $S$.
%\begin{enumerate}[\rm(i)]
%\item $R$ is an Arf ring.
%\item $R$ has minimal multiplicity and $\e(S_{\mathcal{M}})\leq 2$ for each maximal ideal $\mathcal{M}$ of $S$.
%\end{enumerate}
\end{thm}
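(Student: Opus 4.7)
The plan is to combine Lipman's classical characterization of one-dimensional Arf rings via iterated local blow-ups with the structural control the generalized Gorenstein hypothesis imposes on the ring $S=a^{-1}I$. Recall that for a one-dimensional Cohen--Macaulay local ring $(R,\m)$ of minimal multiplicity, if $b\in\m$ is a minimal reduction (so $\m^2=b\m$), then the first blow-up $R_1:=R[\m/b]=b^{-1}\m=\m:\m$ is a finite birational extension of $R$, and Lipman's theorem says that $R$ is Arf if and only if $R$ has minimal multiplicity and each localization $(R_1)_{\mathcal{M}}$ at a maximal ideal of $R_1$ is again Arf.

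The first step is the easy part of the ``only if'' direction: since $R$ is Arf, the introductory discussion already guarantees that $R$ has minimal multiplicity. The central step is then to identify $R_1$ with $S$ as subrings of $\rmQ(R)$. I would expect this identification to follow from an earlier lemma in the paper on canonical ideals containing a principal reduction: under the generalized Gorenstein assumption, the defining exact sequence comparing $R$ with its canonical module should yield a comparison of the form $b\cdot I=a\cdot\m$ (possibly after a good choice of the minimal reduction $b$ of $\m$ compatible with $a$), which gives $b^{-1}\m=a^{-1}I$ and hence $R_1=S$. This identification is the main obstacle I anticipate, because it is the step where the precise form of the generalized Gorenstein hypothesis must be exploited.

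Once $R_1=S$ is in place, the theorem reduces to the equivalence, at each maximal ideal $\mathcal{M}$ of $S$: $(S_{\mathcal{M}} \text{ is Arf})\iff \mathrm{e}(S_{\mathcal{M}})\le 2$. The implication ``$\Leftarrow$'' is essentially immediate from the introductory remark that any one-dimensional Cohen--Macaulay local ring of multiplicity at most two is automatically Arf, and this already completes the ``if'' direction via Lipman's criterion. The implication ``$\Rightarrow$'' is where the generalized Gorenstein structure is used a second time: the blow-up $S$ inherits a canonical ideal coming from $I$, and the resulting constraint (together with the type bound built into the definition of generalized Gorenstein) forces an Arf $S_{\mathcal{M}}$ to have multiplicity at most $2$.

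Thus the proof will proceed in three moves: (1) invoke Lipman's iterated blow-up characterization to reduce Arfness of $R$ to ``$R$ has minimal multiplicity and $R_1$ is locally Arf''; (2) prove $R_1=S$ under the generalized Gorenstein hypothesis, using the canonical-ideal machinery developed earlier; and (3) use the inherited canonical structure on $S$ to upgrade ``Arf'' to ``multiplicity $\le 2$'' locally, trivially reversing the upgrade on the other side. The hardest ingredient is step (2); the rest is bookkeeping once that identification is secured.
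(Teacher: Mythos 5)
There is a genuine gap, and it sits exactly at the step you yourself flag as the main obstacle: the identification $R_1=S$. This identification is false for a general generalized Gorenstein ring. What is true (and what the paper proves as its key structural result, Theorem \ref{3.1}) is that $S=R_N$ where $N=\ell_R(R/\fkc)$ and $\fkc=R:_{\rmQ(R)}S$; one has $N=1$, i.e.\ $S=R_1=\m:_{\rmQ(R)}\m$, precisely in the almost Gorenstein non-Gorenstein case. Example \ref{ornek5} of the paper ($R=k[\![t^5,t^{16},t^{17},t^{18},t^{19}]\!]$) has $N=3$, so $S$ is the \emph{third} blowup, not the first. Consequently your reduction via Lipman's criterion collapses: knowing that $R$ has minimal multiplicity and that $(R_N)_{\mathcal M}=S_{\mathcal M}$ has multiplicity $\le 2$ says nothing, by itself, about the intermediate blowups $R_1,\dots,R_{N-1}$, each of which must also have minimal multiplicity for $R$ to be Arf. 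The content of Theorem \ref{3.1} --- proved by induction on $\ell_R(R/\fkc)$ using the Goto--Kumashiro descent (Theorem \ref{2.5}) from $R$ to $B=\m:_{\rmQ(R)}\m$ --- is exactly that the generalized Gorenstein hypothesis plus minimal multiplicity forces all of $R_1,\dots,R_{N-1}$ to be local of minimal multiplicity $\rme(R)$ \emph{and} forces $S$ to be Gorenstein. Your plan contains no substitute for this inductive control of the intermediate blowups.

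A second, smaller issue is your step (3). The implication ``$S_{\mathcal M}$ Arf $\Rightarrow \rme(S_{\mathcal M})\le 2$'' cannot come from Arfness together with a ``type bound built into the definition of generalized Gorenstein'': Arf local rings of arbitrary multiplicity exist (e.g.\ $k[\![t^e,\dots,t^{2e-1}]\!]$), and the generalized Gorenstein hypothesis is on $R$, not on $S_{\mathcal M}$. The mechanism the paper uses is that $S$ is \emph{Gorenstein} (again from Theorem \ref{3.1}); then Arfness gives $S_{\mathcal M}$ minimal multiplicity, hence Cohen--Macaulay type $\rme(S_{\mathcal M})-1$ when not regular, and Gorensteinness forces this type to be $1$, i.e.\ $\rme(S_{\mathcal M})\le 2$. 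The converse direction of your step (3) (multiplicity $\le 2$ implies Arf, stable under finite birational extensions) is fine and matches Proposition \ref{3.2}. To repair the argument you would need to replace your step (2) by the full statement of Theorem \ref{3.1}, after which the bookkeeping with Proposition \ref{2.2} proceeds as you describe.
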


A generalized Gorenstein ring \cite{GK} is one of the generalization of a Gorenstein ring, defined by a certain embedding of the rings into their canonical modules; see \ref{def1.2} for the precise definition. The class of generalized Gorenstein rings is a new class of Cohen-Macaulay rings, which naturally covers the class of Gorenstein rings and fills the gap in-between Cohen-Macaulay and Gorenstein properties; see \cite{CGKM, GGHV, GK, GK2, GMP, GMTY1, GMTY2, GMTY3, GMTY4, GRTT, GTT, GTT2, GT, T}.  In fact such rings extend the definition of almost Gorenstein rings which were initially defined by Barucci and Fr\"oberg \cite{BF} over one-dimensional analytically unramified local rings, and further developed  and defined by Goto, Matsuoka, and Phuong \cite{GMP} over arbitrary Cohen-Macaulay local rings of dimension one. 

The next two corollaries of Theorem \ref{intro1.2} yield generalizations of a characterization of Barucci and Fr\"oberg \cite[13]{BF} concerning Arf numerical semigroup rings; see Theorem \ref{3.5}, Corollary \ref{3.4}, and Proposition \ref{3.7}. We set $B = \m:_{\rmQ(R)}\m$, the endomorphism algebra of $\m$, where $\rmQ(R)$ is the total quotient ring of $R$. Note that, if $R$ is not Gorenstein but almost Gorensein, then $B=S$; see \cite[3.16]{GMP}.

\begin{cor}\label{intro 3.4} Let $(R, \m)$ be a one-dimensional Cohen--Macaulay local ring with canonical module. Then the following conditions are equivalent.
\begin{enumerate}[\rm(i)]
\item $R$ is an almost Gorenstein Arf ring.
\item $\e(B_{\mathcal{M}}) \le 2$ for each maximal ideal $\mathcal{M}$ of $B$.
\end{enumerate}
\end{cor}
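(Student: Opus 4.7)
The plan is to apply Theorem~\ref{intro1.2} (the characterization of Arf rings among one-dimensional generalized Gorenstein rings) together with two structural facts: every almost Gorenstein ring is generalized Gorenstein, and when $R$ is almost Gorenstein but not Gorenstein, one has $B = S$, as recalled in the preamble from \cite[3.16]{GMP}.

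For (i)~$\Rightarrow$~(ii), I would split into two cases according to whether $R$ is Gorenstein. If $R$ is Gorenstein, take the canonical ideal to be $I = (a)$ for a non-zero-divisor $a \in \m$, so that $S = R$; Theorem~\ref{intro1.2} then forces $\e(R) \le 2$, and since $B$ is a finite birational extension of $R$ contained in the integral closure $\overline{R}$, the multiplicity of each localization $B_{\mathcal{M}}$ is bounded above by $\e(R)$, hence is at most $2$. If $R$ is almost Gorenstein but not Gorenstein, I invoke $B = S$ and read off the inequality $\e(B_{\mathcal{M}}) = \e(S_{\mathcal{M}}) \le 2$ directly from Theorem~\ref{intro1.2}.

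For (ii)~$\Rightarrow$~(i), the main task is to establish that $R$ is almost Gorenstein; once this is done, the Arf property follows by reversing the argument above and appealing to Theorem~\ref{intro1.2} once more (with minimal multiplicity being extracted from the multiplicity bound on $B$). I would exploit the always-available inclusion $\m B \subseteq \m$, which forces $B/R$ to be a $k$-vector space, and seek to realize the canonical module $K$ as a fractional ideal sitting between $R$ and $B$. The bound $\e(B_{\mathcal{M}}) \le 2$ should then force $K/R$ to inherit annihilation by $\m$, producing the short exact sequence $0 \to R \to K \to K/R \to 0$ with $\m(K/R) = 0$ that characterizes the almost Gorenstein condition.

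I expect the main obstacle to be precisely this construction of $K$ inside $B$: the multiplicity bound on $B$ does not by itself single out a canonical module, so to pin it down one has to combine the structural rigidity imposed by $\e(B_{\mathcal{M}}) \le 2$ with characterizations of almost Gorensteinness from \cite{GMP}, and finally verify that the embedding $R \hookrightarrow K \hookrightarrow B$ realizes the almost Gorenstein sequence.
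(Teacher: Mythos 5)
Your direction (i) $\Rightarrow$ (ii) is fine and essentially matches the paper: in the Gorenstein case one gets $\e(R)\le 2$ from Theorem \ref{intro1.2} (or just from the Arf property directly) and then bounds $\e(B_{\mathcal{M}})$ by Proposition \ref{3.2} (equivalently \cite[12.2]{GTT}); in the non-Gorenstein case $B=S$ and the bound is read off from Theorem \ref{intro1.2}.

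The direction (ii) $\Rightarrow$ (i), however, has a genuine gap, and you have located it yourself without closing it: you never actually prove that $R$ is almost Gorenstein. Your plan --- realize a canonical fractional ideal $K$ with $R\subseteq K\subseteq B$ and deduce $\m(K/R)=0$ --- is exactly the hard content, since by \cite[3.11]{GMP} the inclusion $K\subseteq B=\m:_{\rmQ(R)}\m$ is essentially equivalent to the almost Gorenstein property, so ``seeking to realize $K$ inside $B$'' presupposes what is to be shown; the hypothesis $\e(B_{\mathcal{M}})\le 2$ does not hand you such a $K$ by any argument you have sketched. The missing ingredient is Theorem \ref{2.4} (that is, \cite[5.1]{GMP}): $B$ is Gorenstein if and only if $R$ is almost Gorenstein with $\ed(R)=\e(R)$. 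With that result the step is immediate: a one-dimensional Cohen--Macaulay local ring of multiplicity at most $2$ is a hypersurface, so the hypothesis forces $B$ to be Gorenstein, whence $R$ is almost Gorenstein with minimal multiplicity; then your own case split (if $R$ is Gorenstein it is a hypersurface with $\e(R)\le 2$, hence Arf by Proposition \ref{3.2}; if not, $B=S$ and Theorem \ref{intro1.2} applies) finishes the proof. Without invoking Theorem \ref{2.4} (or reproving it), your argument does not go through.
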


Given a generalized Gorenstein numerical semigroup ring, our next result may be used to check whether it is Arf; see Proposition \ref{3.7}. We set $\fkc = R:_{\rmQ(R)}S$.

\begin{cor} Let $\ell>0$ be an integer and $0 < a_1 < a_2 < \cdots < a_{\ell}$ be integers with $\gcd(a_1, a_2, \ldots, a_{\ell})=1$. Let $k$ be a field, $R=k[\![t^{a_1}, t^{a_2}, \ldots, t^{a_\ell}]\!]$ be the numerical semigroup ring over $k$, and let $H=\left<a_1, a_2, \ldots, a_{\ell}\right>$ be the corresponding semigroup. Assume $R$ is a generalized Gorenstein ring. Then the following  are equivalent.
\begin{enumerate}[\rm(i)]
\item $R$ is an Arf ring.
\item $R$ has minimal multiplicity, $2+ \ell_R(R/\fkc)\cdot a_1 \in H$, and $2 + a_i \in H$ for each $i=2, \ldots, \ell$. 
\end{enumerate}
\end{cor}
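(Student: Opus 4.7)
The plan is to invoke Theorem~\ref{intro1.2} and then translate its multiplicity criterion into the explicit semigroup-theoretic form claimed. Since $\overline{R}=k[\![t]\!]$ is a discrete valuation ring, every finite birational extension of $R$ lying inside $\overline{R}$ is local; in particular the ring $S=\{x/a : x\in I\}$ of Theorem~\ref{intro1.2} has a unique maximal ideal $\mathcal{M}=tk[\![t]\!]\cap S$. Consequently Theorem~\ref{intro1.2} reduces here to the statement that $R$ is Arf if and only if $R$ has minimal multiplicity and $\e(S)\le 2$, and minimal multiplicity for the numerical semigroup ring $R$ is exactly the condition $\ell=a_1$ built into (ii).

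Next, since $S$ is local with DVR normalization, $\e(S)$ equals the smallest positive element of the value semigroup $v(S)\subseteq\mathbb{N}$, so $\e(S)\le 2$ amounts to $2\in v(S)$. Because $S$ is isomorphic to a canonical module of $R$ and therefore reflexive as a fractional ideal, one has the biduality $S=R:_{\rmQ(R)}\fkc$, which at the level of value semigroups yields
\[
v(S) \;=\; \bigl\{\,n\in\mathbb{Z}\;:\;n+v(\fkc)\subseteq H\,\bigr\}.
\]
Thus $\e(S)\le 2$ is equivalent to $2+v(\fkc)\subseteq H$; and since $v(\fkc)$ is an $H$-ideal, this in turn reduces to the inclusion $2+g\in H$ for each minimal $H$-ideal generator $g$ of $v(\fkc)$.

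The main obstacle, and the core combinatorial step of the proof, is then the identification of these generators: under the generalized Gorenstein hypothesis combined with minimal multiplicity, the Ap\'ery set of $v(\fkc)$ modulo $a_1$ should equal $\{\ell_R(R/\fkc)\cdot a_1,\,a_2,\,\ldots,\,a_\ell\}$. Minimal multiplicity gives $\mathfrak{m}^2=a\mathfrak{m}$, equivalently $a_i+a_j-a_1\in H$ for all $i,j\ge 1$, which ensures that each Ap\'ery class of $v(\fkc)$ modulo $a_1$ is entered at a unique minimum. The generalized Gorenstein embedding of $R$ into its canonical module (see \cite{GK}) then forces $a_i\in v(\fkc)$ for every $i\ge 2$, so each nonzero Ap\'ery class of $v(\fkc)$ is entered exactly at $a_i$, while the entire deficiency $|H\setminus v(\fkc)|=\ell_R(R/\fkc)$ accumulates in the Ap\'ery class of $0$, contributing the single remaining generator $\ell_R(R/\fkc)\cdot a_1$. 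Once this description is established, the condition $2+v(\fkc)\subseteq H$ unpacks termwise as $2+\ell_R(R/\fkc)\cdot a_1\in H$ together with $2+a_i\in H$ for $i=2,\ldots,\ell$, which combined with minimal multiplicity is condition (ii); the desired equivalence then follows from Theorem~\ref{intro1.2}.
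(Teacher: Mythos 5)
Your overall strategy---reduce via Theorem \ref{intro1.2} to the single condition $\rme(S)\le 2$ (legitimate here since $S$ is local for a numerical semigroup ring), rewrite this as $2\in v(S)$, and unpack $v(S)$ by duality against the conductor---is reasonable and genuinely different from the paper's route: the paper instead uses Theorem \ref{3.1} to realize $S=R_N$ as an iterated blowup, observes that $R$ is Arf if and only if $R_{N-1}$ is an almost Gorenstein Arf ring, applies the Barucci--Fr\"oberg criterion (Theorem \ref{3.5}) to $R_{N-1}=k[\![H_{N-1}]\!]$, and translates the conditions back through the elementary shift $x\in H_1 \Leftrightarrow x+a_1\in H$, iterated $N-1$ times.

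The problem is that the step you yourself flag as ``the core combinatorial step'' is precisely where your proof is missing. The assertion that the Ap\'ery set of $v(\fkc)$ modulo $a_1$ is $\{\ell_R(R/\fkc)\cdot a_1,\,a_2,\ldots,a_\ell\}$---equivalently, that $a_i\in v(\fkc)$ for $i\ge 2$ and that all of $H\setminus v(\fkc)$ lies in the class of $0$---is stated with ``should equal'' and justified only by saying the generalized Gorenstein embedding ``forces'' it; that is not an argument, and the second claim depends entirely on the first. The statement is in fact true, but proving it requires the blowup structure supplied by Theorem \ref{3.1}: one needs $v(S)=\{x\ge 0 : x+Na_1\in H\}$ with $N=\ell_R(R/\fkc)$ (the iterated form of the paper's shift claim) together with closure of $v(S)$ under addition to get $a_i+v(S)\subseteq H$, and the Ap\'ery description of $H$ under minimal multiplicity to see that $ka_1\notin v(\fkc)$ for $k<N$. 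In other words, closing your gap means importing essentially the same machinery the paper deploys directly. Two smaller gaps: the biduality $S=R:_{\rmQ(R)}\fkc$ and the identity $v(R:_{\rmQ(R)}\fkc)=\{n: n+v(\fkc)\subseteq H\}$ are exactly what the ``if'' direction of your criterion needs (to pass from $2+v(\fkc)\subseteq H$ to $2\in v(S)$); the first follows from $\fkc=K:S$ and $K:(K:S)=S$ and the second from value-determinedness of monomial fractional ideals, but neither is addressed. You also do not treat the degenerate cases $\rme(R)\le 2$, where $\fkc=R$ and Theorem \ref{3.1} is unavailable.
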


In section 4 we consider idealizations of the form $A = R \ltimes \fkc$, where $R$ is an one-dimensional Cohen-Macaulay local ring. We obtain a new criterion for $A$ to be a generalized Gorenstein Arf ring in terms of the integral closure $\overline{R}$ of $R$. A special case of our result is as follows; see Theorem \ref{4.1}.

\begin{cor} Let $R$ be a one-dimensional Cohen-Macaulay local ring $R$ with a canonical module. If $R$ is a generalized Gorenstein ring that has minimal multiplicity, and $S=\overline{R}$, then both $R$ and $A$ are generalized Gorenstein Arf rings.
\end{cor}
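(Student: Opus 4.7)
The approach is to verify the hypotheses of Theorem~\ref{intro1.2} for both $R$ and the idealization $A = R \ltimes \fkc$, exploiting that whenever $S = \overline{R}$ the rings that must be tested are built out of DVRs and hence have very small multiplicities.

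For $R$ itself the argument is essentially immediate from Theorem~\ref{intro1.2}. Since $R$ is one-dimensional Cohen--Macaulay, generalized Gorenstein, and admits a canonical module, it has a canonical ideal $I$ containing a parameter $(a)$ as a reduction, so the ring $S = \{x/a : x \in I\}$ appearing in Theorem~\ref{intro1.2} is defined and, by hypothesis, equals $\overline{R}$. Because $\overline{R}$ is a one-dimensional, module-finite, semilocal normal ring, each localization $\overline{R}_{\mathcal{M}}$ is a DVR and so has multiplicity one. Combined with the assumption that $R$ has minimal multiplicity, Theorem~\ref{intro1.2} immediately yields that $R$ is a generalized Gorenstein Arf ring.

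For $A = R \ltimes \fkc$ the plan is to verify the hypotheses of Theorem~\ref{intro1.2} once more. First, $A$ is one-dimensional Cohen--Macaulay local with maximal ideal $\n = \m \ltimes \fkc$, since $\fkc$ is a maximal Cohen--Macaulay $R$-module. Minimal multiplicity of $A$ reduces to the identity $\n^2 = (a, 0)\n$, which follows from $\m^2 = a\m$ together with $\m\fkc = a\fkc$. The latter is a consequence of the $\overline{R}$-module structure of $\fkc$: the fractional ideal $a^{-1}\m$ satisfies $(a^{-1}\m)^2 = a^{-1}\m$ and is therefore contained in $\overline{R}$, whence $\m\overline{R} = a\overline{R}$ and hence $\m\fkc = \m\overline{R}\cdot\fkc = a\fkc$.

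It remains to identify a canonical ideal of $A$ together with its parameter reduction, to verify that $A$ itself is generalized Gorenstein, and to compute the associated $S$-ring $S_A$ of Theorem~\ref{intro1.2}. Using the standard description $\omega_A \cong \Hom_R(A, \omega_R)$ of the canonical module of an idealization and the identification $\Hom_R(\fkc, \omega_R) \cong \omega_R :_{\rmQ(R)} \fkc$, one can produce a canonical ideal of $A$ with parameter reduction $(a, 0)$; the generalized Gorenstein property of $A$ should then follow from compatibility of the embedding $R \hookrightarrow \omega_R$ with the idealization structure, and the ring $S_A$ is expected to coincide with the idealization $\overline{R} \ltimes \overline{R}$. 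Its localizations are of the form $V \ltimes V$ with $V$ a DVR, which a direct length computation (using a uniformizer of $V$ as a parameter of $V \ltimes V$) shows to have multiplicity exactly $2$. Consequently $e\bigl((S_A)_{\mathcal{M}}\bigr) \le 2$ for every maximal ideal $\mathcal{M}$ of $S_A$, and Theorem~\ref{intro1.2} yields that $A$ is Arf. The main obstacle I anticipate is the explicit identification of a canonical ideal of $A$ (and thus of $S_A$) together with the verification of the generalized Gorenstein property of $A$, since in the generalized Gorenstein -- as opposed to almost Gorenstein -- setting the relevant embeddings are less rigid and require more care.
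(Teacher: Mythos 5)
Your proposal follows essentially the same route as the paper's proof of Theorem \ref{4.1}: show $A=R\ltimes\fkc$ has minimal multiplicity via $\m\fkc=a\m\fkc/a=a\fkc$ (the paper gets this from $\fkc$ being an ideal of $B=R[\frac{\m}{a}]$, you from $\m\overline{R}=a\overline{R}$ and the $\overline{R}$-module structure of the conductor -- both work), identify the associated ring of Theorem \ref{intro1.2} for $A$ as $S\ltimes S\cong\overline{R}\ltimes\overline{R}$, whose localizations $V\ltimes V$ with $V$ a DVR have multiplicity $2$, and then apply Theorem \ref{intro1.2} to both $R$ and $A$. The items you flag as anticipated obstacles -- that $\rmK_A\cong\Hom_R(\fkc,\rmK_R)\times \rmK_R\cong S\times \rmK_R$, that $A[L]=L^2=S\ltimes S$ for $L=S\times K$, and that $A$ is generalized Gorenstein -- are precisely the facts the paper imports by citation from \cite[Section 4, 4.14, 4.15]{GK}, and your expectations for each of them are correct.
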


%%%%%%%%%%%%%%%%%%%%%%%%%%%%%%%%%%%%%%%%%%%%%%%%%%%%%%%%%%%%%%%%%%%%%%%%%%%%%%%%%%%%%%%%%%%%%%%%%%%%%%%%%%%%%%%%%%%%%%%%%%%%%%%%%%%%%%%%%%%%%%%%%%%%%%%%%%%%%%%%%%%%%%%%%%%%%%%%%%%%%%%%%%%%%%%%%%%%%%%%%%%%%%
%%%%%%%%%%%%%%%%%%%%%%%%%%%%%%%%%%%%%%%%%%%%%%%%%%%%%%%%%%%%%%%%%%%%%%%%%%%%%%%%%%%%%%%%%%%%%%%%%%%%%%%%%%%%%%%%%%%%%%%%%%%%%%%%%%%%%%%%%%%%%%%%%%%%%%%%%%%%
%%%%%%%%%%%%%%%%%%%%%%%%%%%%%%%%%%%%%%%%%%%%%%%%%%%%%%%%%%%%%%%%%%%%%%%%%%%%%%%%%%%%%%%%%%%%%%%%%%%%%%%%%%%%%%%%%%%%%%%%%%%%%%%%%%%%%%%%%%%%%%%%%%%%%%%%%%%%
%%%%%%%%%%%%%%%%%%%%%%%%%%%%%%%%%%%%%%%%%%%%%%%%%%%%%%%%%%%%%%%%%%%%%%%%%%%%%%%%%%%%%%%%%%%%%%%%%%%%%%%%%%%%%%%%%%%%%%%%%%%%%%%%%%%%%%%%%%%%%%%%%%%%%%%%%%%%

\section{Preliminaries}

This section is devoted to the definitions and some basic properties of Arf rings and generalized Gorenstein rings. Throughout this section, $R$ denotes a $d$-dimensional Cohen--Macaulay local ring with unique maximal ideal $\m$, residue field $k$ and canonical module $\rmK_R$. 

We start by recalling the definition of \emph{Ulrich} modules.

\begin{defn}[\cite{GK}]\label{def1.1} Let $M$ be a finitely generated $R$-module of dimension $s\ge0$,  and let $\fka$ be an $\m$-primary ideal of $R$. Then $M$ is said to be an Ulrich $R$-module with respect to $\fka$ provided the following conditions hold:
\begin{enumerate}[\rm(a)]
\item $M$ is a Cohen--Macaulay $R$-module.
\item $\rme_{\fka}^0(M) = \ell_R(M/\fka M)$.
\item $M/\fka M$ is a free $R/\fka$-module.
\end{enumerate}
Here $\ell_R(M)$ and $\rme_{\fka}^0(M)$ denote the length of $M$ as an $R$-module and the multiplicity of $M$ with respect to $\fka$, respectively. 
\end{defn}

Ulrich modules, with respect to the unique maximal ideal, were originally defined in \cite{BHU} as maximally generated maximal Cohen--Macaulay modules. This definition was then generalized in the article \cite{GOTWY} of the third author, Ozeki, Takahashi,  Watanabe, and Yoshida; in \cite{GOTWY}.
If $R$ is a non-regular and has minimal multiplicity, then it follows from the definition that $\m$ is an Ulrich ideal. Recall that $R$ is said to have {\it minimal multiplicity} if $\rme(R) = \ed(R) - \dim A + 1$, where $\ed(R)$ denotes the embedding dimension of $R$. Moreover higher syzygies of Ulrich ideals are Ulrich modules; see \cite[3.2]{GOTWY}. We refer the reader to \cite{GK, GOTWY, GOTWY2, GTT} for further information on Ulrich modules. 

\begin{defn}[\cite{GK}]\label{def1.2} $R$ is said to be a {\it generalized Gorenstein} ring, if either $R$ is Gorenstein, or $R$ is not Gorenstein, but there exists an $\m$-primary ideal $\fka$ of $R$ and an exact sequence of $R$-modules 
$$0 \to R \xrightarrow{\varphi} \rmK_R \to C \to 0,$$
where $C$ is an Ulrich $R$-module with respect to $\fka$, and the induced homomorphism 
$$
R/\fka \otimes_R \varphi : R/\fka \to \rmK_R/\fka \rmK_R
$$ is injective.
If the latter case occurs, then $R$ is called a {\it generalized Gorenstein ring with respect to $\fka$}.
\end{defn}

\begin{defn}[\cite{GTT}]\label{def1.3} $R$ is said to be an {\it almost Gorenstein ring} if it is Gorenstein, or not Gorenstein but is a generalized Gorenstein ring with respect to $\m$.
\end{defn}

Next we record some preliminary results pertaining to \emph{Arf rings}.

%%%%%%%%%%%%%%%%%%%%%%%%%%%%%%%%%%%%%%%%%%%%%%%%%%%%%%%%%%%%%%%%%%%%%%%%%%%%%%%%%%%%%%%%%%%%%%%%%%%%%%%%%%%%%%%%%%%%%%%%%%%%%%%%%%%%

\subsection*{On Arf rings} For this subsection on Arf rings, $A$ denotes a commutative Noetherian \emph{semi-local} ring satisfying the following condition:\\
$(\sharp)$ $A_\mathcal{M}$ is a one-dimensional Cohen-Macaulay ring for each maximal ideal $\mathcal{M}$ of $A$.

Let $\calF_A$ be the set of ideals of $A$ that contain a non-zerodivisor on $A$. Then, for each $I \in \calF_A$, there is a filtration of endomorphism algebras as follows:
$$
A \subseteq I:_{\rmQ(A)}I \subseteq I^2:_{\rmQ(A)} I^2 \subseteq \cdots \subseteq I^n:_{\rmQ(A)}I^n \subseteq \cdots \subseteq\overline{A}.
$$
Here $\overline{A}$ and $Q(A)$ denote the integral closure and the total quotient ring of $A$,  respectively. We set
$$
A^I = \bigcup_{n\geq 1}[I^n:_{\rmQ(A)}I^n].
$$
The ring $A^I$, a module-finite extension over $A$, is called the {\it blowup of $A$ at $I$}.
Notice, if $a \in I$ is a reduction of $I$, then one has:
$$
A^I = A\left[\frac{I}{a}\right], \text{ where } \frac{I}{a} = \left\{\frac{x}{a} ~\middle|~ x \in I\right\} \subseteq \rmQ(R).
$$

\begin{defn} \label{st} An ideal $I \in \calF_A$ is called a {\it stable ideal} provided $A^I=I:_{\rmQ(A)}I$. 
\end{defn}

Note that, for each $I \in \calF_A$, $I^n$ is stable for some $n\geq 1$. Moreover, an ideal $I$ is stable if and only if $I^2=xI$ for some $x \in I$; see \cite{Lipman} for details.

\begin{defn}[\cite{Arf, Lipman}] \label{Cahit}
$A$ is called an \emph{Arf} ring provided every integrally closed ideal $I \in \calF_A$ is stable.
\end{defn}

\begin{notation}\label{notation} For each nonnegative integer $n$, we set:
\begin{eqnarray*}
A_n = \left\{
 \begin{array}{l}
  A \ \ \ \ \ \  \  \ \ \ \ \ \  \text{if} \ n = 0, \\
  A_{n-1}^{J(A_{n-1})}    \ \    \ \ \text{if} \ n \geq 1,
 \end{array}
\right.
\end{eqnarray*}
where $J(A_{n-1})$ stands for the Jacobson radical of the ring $A_{n-1}$.
\end{notation}

\noindent
Notice $A_1 = A^{J(A)}$ is a one-dimensional Noetherian semi-local ring which is a module-finite extension over $A$. Moreover, $A_1$ satisfies the condition $(\sharp)$, namely the localization $(A_1)_{\mathcal{N}}$ is a Cohen--Macaulay local ring of dimension one for each $\mathcal{N} \in \Max(A_1)$, i.e., for each maximal ideal $\mathcal{N}$ of $A_1$.

The following characterization of Arf rings plays an important role for our argument; see, for example, the proof of Proposition \ref{3.2} and that of Theorem \ref{intro1.2}. One can deduce it from the results of Lipman \cite{Lipman}, but we include here a short and a different proof for  the sake of completeness.

\begin{prop}[see {\cite[2.2]{Lipman}}]\label{2.2}
The following conditions are equivalent.
\begin{enumerate}[\rm(i)]
\item $A$ is an Arf ring.
\item $\ed((A_n)_\mathcal{M}) = \rme((A_n)_\mathcal{M})$ for each $n\geq 0$ and  maximal ideal $\mathcal{M}$ of $A_n$.
\end{enumerate}
\end{prop}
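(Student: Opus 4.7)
The plan is to reformulate both conditions in terms of stability of the Jacobson radicals $J(A_n)$ and then induct on $n$. The first observation is that for a one-dimensional Cohen--Macaulay local ring $(B,\mathfrak{n})$, the maximal ideal $\mathfrak{n}$ is stable in the sense of Definition \ref{st} if and only if $B$ has minimal multiplicity $\rme(B)=\ed(B)$; indeed, if $\mathfrak{n}^2=a\mathfrak{n}$ for a superficial parameter $a$, then $\rme(B)=\ell_B(B/aB)=\ed(B)$, and the converse is standard after a harmless residue field extension. Applying this to each localization $(A_n)_{\mathcal{M}}$ (each of which still satisfies $(\sharp)$), condition (ii) becomes: \emph{$J(A_n)$ is a stable ideal of $A_n$ for every $n\ge 0$.} A further useful observation is that $J(A)$ itself is integrally closed and belongs to $\calF_A$: each maximal ideal is prime hence integrally closed, intersections of integrally closed ideals remain integrally closed, and prime avoidance against the height-zero minimal primes of $A$ provides a non-zerodivisor in $J(A)$.

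For (i)$\Rightarrow$(ii), I would argue by induction on $n$. For $n=0$, since $J(A)$ is an integrally closed member of $\calF_A$, the Arf hypothesis forces $J(A)$ to be stable. The inductive step reduces to showing $A_1$ is Arf whenever $A$ is. Using stability of $J(A)$ one has the explicit description $A_1 = J(A):_{\rmQ(A)}J(A)$, and one establishes a correspondence between integrally closed ideals $\calI\in\calF_{A_1}$ and integrally closed ideals of $A$ containing a sufficiently high power of $J(A)$; stability of the latter (by the Arf hypothesis at level $0$) then transports along this correspondence to stability of $\calI$, showing $A_1$ is Arf. Iterating yields $J(A_n)$ stable for every $n\ge 0$.

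For (ii)$\Rightarrow$(i), assume $J(A_n)$ is stable for every $n\ge 0$ and let $I\in\calF_A$ be integrally closed. Since $I$ is $J(A)$-primary (by the one-dimensional Cohen--Macaulay hypothesis), the ascending chain of blowups stabilizes at some level $N$ with $A^I\subseteq A_N$; the stability of $J(A_N)$ then forces $IA_N$ to be principal, and descending along the tower using $A_{i+1} = J(A_i):_{\rmQ(A)}J(A_i)$ at each step pushes this back to the stability $I^2=aI$ in $A$. The main obstacle lies in the inductive step of the forward direction, where one needs a clean description of how integrally closed ideals of $A_1$ pull back to integrally closed ideals of $A$; this is the delicate point whose careful handling is what distinguishes a short, direct proof from Lipman's original more intricate treatment.
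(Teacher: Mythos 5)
Your reduction of condition (ii) to ``$J(A_n)$ is stable for every $n$'' and your plan for (i)$\Rightarrow$(ii) match the paper's strategy, but you explicitly leave open the one step that carries the weight: how integrally closed ideals of $A_1$ correspond to integrally closed ideals of $A$. The paper closes this with a one-line device you should adopt: choose $x\in J=J(A)$ with $J^2=xJ$, so that $A_1=A^J=J/x$; given an integrally closed $L\in\calF_{A_1}$, set $I=xL$. Then $I\subseteq xA_1=J\subseteq A$ is an integrally closed open ideal of $A$, hence stable by the Arf hypothesis, say $I^2=\xi I$, and dividing by $x^2$ gives $L^2=\frac{\xi}{x}L$ with $\frac{\xi}{x}\in L$. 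No general ``correspondence'' of ideals is needed; one only has to push a single ideal down and pull the stability equation back up.

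The direction (ii)$\Rightarrow$(i) is where your argument has a genuine gap. First, the assertion that $A^I\subseteq A_N$ for some $N$ is a nontrivial fact (it is a lemma in Lipman's paper, not a formal consequence of $I$ being open), and you give no argument for it. More seriously, the descent you propose cannot work as stated: once $A^I\subseteq A_N$ one gets $IA_N$ principal \emph{for free} (indeed $I\overline{A}$ is principal for any regular ideal whenever $\overline{A}$ is a finite product of DVRs), so ``$IA_N$ principal, now descend the tower'' would prove that every integrally closed $I\in\calF_A$ is stable for \emph{every} such ring, Arf or not --- contradicting, e.g., Example \ref{ornek1}. The hypothesis (ii) must enter at every level of the descent, and you have not said how. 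The paper's argument does exactly this and is worth internalizing: reduce to $A$ local with $\m^2=x\m$, set $B=A^{\m}=\m/x$ and $L=I/x\subseteq B$, an integrally closed ideal of $B$; if $L$ is not stable then $L\subsetneq B$ and $\ell_{B_{\mathcal N}}(B_{\mathcal N}/LB_{\mathcal N})\le \ell_A(B/L)<\ell_A(A/I)$ for each $\mathcal N\in\Max(B)$, so repeating the construction strictly decreases a finite length and must terminate; at termination $L^2=\xi L$, whence $I^2=(x\xi)I$. Replacing your ``go up to level $N$ and come back down'' by this length-descent removes both the unproved containment $A^I\subseteq A_N$ and the unjustified descent step.
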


\begin{proof}
$(i) \Rightarrow (ii)$: Let $B=A^J$ be the blowup of $A$ at its Jacobson radical $J=J(A)$. By the induction argument, it suffices to prove $B$ is Arf and $\ed(A_\mathcal{M}) = \rme(A_\mathcal{M})$ for each $\mathcal{M} \in \Max(A)$. Since $\mathcal{M} \in \calF_A$ is integrally closed and $A$ is Arf, $\mathcal{M}$ is stable, i.e., there exists element $f \in \mathcal{M}$ satisfying $\mathcal{M}^2 = f\mathcal{M}$, which yields $\ed(A_\M)=\rme(A_\M)$. Let us make sure of the Arf property for $B$.
Note that the Jacobson radical $J=J(A)$ is an integrally closed open ideal of $A$. Thus we choose $x \in J$ with $J^2 = x J$. Therefore we have 
$$
B=A^J =A\left[\frac{J}{x}\right]= \frac{J}{x}.
$$
Let $L \in \calF_B$ be an integrally closed ideal of $B$ and we will show that $L$ is stable. By setting $I=xL$, we get 
$$
I \subseteq J \subseteq A
$$
so that $I$ is an open ideal of $A$, i.e., $I \in \calF_A$. It is straightforward to show that $I$ is integrally closed, whence $I^2 = \xi I$ for some $\xi \in I$. Hence 
$$
L^2 = \frac{\xi}{x}L \; \; \;\text{and} \;\;\; \frac{\xi}{x} \in L
$$
which imply that $B$ is Arf, as desired.

$(ii) \Rightarrow (i)$ Let $I \in \calF_A$ be an integrally closed ideal of $A$. We may assume that $I$ is a proper ideal of $A$. Localizing $A$ at its maximal ideal, we may also assume $A$ is a local ring with maximal ideal $\m$. Since $A$ has minimal multiplicity, there exists $x \in \m$ such that $\m^2 = x \m$. Let 
$$
B := A^{\m} =A\left[\frac{\m}{x}\right]= \frac{\m}{x}
$$ 
be the blowup of $A$ at $\m$. As $I \subseteq \m$, we see that $L:=\frac{I}{x} \subseteq B$ is an ideal of $B$. One can show that $L$ is integrally closed. We now assume that $L$ is not stable in $B$. Then $L \subsetneq B$ and, for each $\mathcal{N} \in \Max(B)$, one has:
$$
\ell_{B_\mathcal{N}}(B_\mathcal{N}/LB_\mathcal{N}) \le \ell_B(B/L) \le \ell_A(B/L) < \ell_A(A/I).
$$
Repeating the same process for $B_\mathcal{N}$ recursively contradicts the fact that $\ell_A(A/I)$ is finite. Therefore $L$ is stable, so that we can choose $\xi \in L$ satisfying $L^2 = \xi L$. Since $I =xL$, we have 
$$
I^2 = (x\xi)I,
$$
which yields that $A$ is an Arf ring.
\end{proof}
%\todo[inline]{Edit the proof of \ref{2.2}}

We finish this section with a few more observations on Arf property.

\begin{prop}\label{3.2}
Assume $A$ is one-dimensional, Cohen--Macaulay and local with unique maximal ideal $\m$. Let $C$ be an intermediate ring between $A$ and $\rmQ(A)$ such that $C$ is a finitely generated $A$-module. Assume $\rme(C_\mathcal{M}) \le 2$ for each maximal ideal $\mathcal{M}$ of $C$. If $C \subseteq D \subseteq \rmQ(C)$ is an intermediate ring that is a finitely generated $C$-module, then $\e(D_{\mathcal{N}})\leq 2$ for each maximal ideal $\mathcal{N}$ of $D$. In particular $C$ is an Arf ring.
\end{prop}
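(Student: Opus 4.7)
The plan is to split the argument into two steps. First, I would establish the multiplicity bound $\e(D_\mathcal{N}) \le 2$ via a rank-plus-associativity computation. Second, I would iterate this bound through the chain $C = C_0 \subseteq C_1 \subseteq C_2 \subseteq \cdots$ of blowups and invoke Proposition \ref{2.2} to conclude that $C$ is Arf.

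For the multiplicity bound, fix $\mathcal{N} \in \Max(D)$ and set $\mathcal{M} = \mathcal{N} \cap C$. Since $D$ is module-finite (hence integral) over $C$, $\mathcal{M}$ is maximal in $C$; localizing at $\mathcal{M}$ reduces us to the case in which $C$ is local. As $D$ sits between $C$ and $\rmQ(C)$, it is a finitely generated torsion-free $C$-module of rank one, and the standard multiplicity identity for maximal Cohen--Macaulay modules over a one-dimensional Cohen--Macaulay local ring gives
\[
\e_\mathcal{M}^0(D) \;=\; \rank_C(D)\cdot \e(C) \;=\; \e(C)\;\le\;2.
\]
On the other hand, the associativity formula for the module-finite extension $C \to D$ reads
\[
\e_\mathcal{M}^0(D) \;=\; \sum_{\mathcal{N}' \in \Max(D)} [D/\mathcal{N}' : C/\mathcal{M}]\cdot \e_{\mathcal{M} D_{\mathcal{N}'}}^0(D_{\mathcal{N}'}),
\]
in which the term indexed by $\mathcal{N}$ is at least $\e(D_\mathcal{N})$: the bracket is a positive integer, and $\mathcal{M} D_\mathcal{N} \subseteq \mathcal{N} D_\mathcal{N}$ implies $\e_{\mathcal{M} D_\mathcal{N}}^0(D_\mathcal{N}) \ge \e(D_\mathcal{N})$. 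Hence $\e(D_\mathcal{N}) \le \e_\mathcal{M}^0(D) \le 2$, as required.

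To conclude that $C$ is Arf, I invoke Proposition \ref{2.2}: it suffices to verify $\ed((C_n)_\mathcal{M}) = \e((C_n)_\mathcal{M})$ for every $n \ge 0$ and every $\mathcal{M} \in \Max(C_n)$. Note that $\e \le 2$ in a one-dimensional Cohen--Macaulay local ring already forces $\ed = \e$: Abhyankar's inequality gives $\ed \le \e$, while any such ring with $\ed = 1$ is a DVR and hence has $\e = 1$. So it suffices to show $\e((C_n)_\mathcal{M}) \le 2$ for every $n$ and every $\mathcal{M}$, by induction on $n$. The base case $n = 0$ is the hypothesis. For the inductive step, if $\e((C_{n-1})_\mathcal{N}) \le 2$ for each $\mathcal{N} \in \Max(C_{n-1})$, then each $(C_{n-1})_\mathcal{N}$ has minimal multiplicity, its maximal ideal is stable, and $C_n = C_{n-1}^{J(C_{n-1})}$ is module-finite over $C_{n-1}$ inside $\rmQ(C_{n-1}) = \rmQ(C)$. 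Applying the first step with $(C_{n-1})_\mathcal{N}$ in place of $C$ and $(C_n)_\mathcal{N}$ in place of $D$ yields $\e((C_n)_{\mathcal{N}'}) \le 2$ for every $\mathcal{N}' \in \Max(C_n)$ lying over $\mathcal{N}$; letting $\mathcal{N}$ range over $\Max(C_{n-1})$ exhausts $\Max(C_n)$. The crucial technical input is the rank identity $\e_\mathcal{M}^0(D) = \rank_C(D)\cdot \e(C)$; the remainder is inductive bookkeeping, with minimal multiplicity automatically supplying the module-finiteness of each successive blowup.
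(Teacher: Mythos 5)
Your proposal is correct, and its skeleton matches the paper's: first show that every module-finite birational extension of $C$ inherits the bound $\e \le 2$ locally, then feed the blowups $C_n$ into Proposition \ref{2.2}. The substantive difference is in the first step: the paper outsources the inequality $\e(D_\mathcal{N}) \le 2$ entirely to the citation \cite[12.2]{GTT}, whereas you prove it from scratch by combining the rank identity $\e_{\mathcal{M}}^0(D) = \rank_C(D)\cdot\e(C_\mathcal{M}) = \e(C_\mathcal{M})$ (valid because $D_{\fkp} = C_{\fkp}$ at every minimal prime $\fkp$, as $C_{\fkp} \subseteq D_{\fkp} \subseteq \rmQ(C)_{\fkp} = C_{\fkp}$) with the decomposition of $\e_{\mathcal{M}}^0(D)$ over the maximal ideals of $D$ and the monotonicity $\e_{\mathcal{M}D_\mathcal{N}}^0(D_\mathcal{N}) \ge \e(D_\mathcal{N})$. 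This makes the argument self-contained, at the cost of a little more multiplicity bookkeeping. Your second step also differs mildly in organization: you induct along the chain $C_{n-1} \subseteq C_n$, relocalizing at each stage, while the paper simply takes $D = C_n$ as a module-finite birational extension of $C$ itself in a single application of the first step; your observation that $\e \le 2$ forces $\ed = \e$ in a one-dimensional Cohen--Macaulay local ring (via Abhyankar plus the DVR case $\ed = 1$) is exactly the point the paper compresses into ``$1\leq \ed(D_\mathcal{N}) = \e(D_\mathcal{N})$.'' Both routes are sound; yours buys independence from \cite{GTT}, the paper's is shorter.
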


\begin{proof} Let $C \subseteq D \subseteq \rmQ(C)$ be an intermediate ring such that $D$ is a module-finite extension over $C$. Let $\mathcal{N} \in \Max(D)$, and set $\mathcal{M}=\mathcal{N}\cap C$. Then $\mathcal{M}$ is a maximal ideal of $C$. Note that  $D_\mathcal{M}\cong D_{\mathcal{N}}$ and $\rmQ(C)_{\mathcal{M}}\cong \rmQ(C_{\mathcal{M}})$. Therefore,
$
C_{\M} \subseteq D_{\M} \subseteq \rmQ(C_{\M}),
$
where $D_{\mathcal{M}}$ is a module-finite extension over $C_\mathcal{M}$. Since $\e(C_\mathcal{M}) \le 2$, \cite[12.2]{GTT} shows $\e(D_\mathcal{N}) \le 2$. 

Now, to see $C$ is Arf, let $D=C_{n}$, a blowup of $C$; see Notation \ref{notation}. Then, $\e(D_\mathcal{N}) \le 2$, which implies $1\leq \ed(D_\mathcal{N}) = \e(D_\mathcal{N})$. Therefore, $C$ is an Arf ring by Proposition \ref{2.2}.
\end{proof}

%%%%%%%%%%%%%%%%%%%%%%%%%%%%%%%%%%%%%%%%%%%%%%%%%%%%%%%%%%%%%%%%%%%%%%%%%%%%%%%%%%%%%%%%%%%%%%%%%%%%%%%%%%%%%%%%%%%%%%%%%%%%%%%%%%%%%%%%%%%%%%%%%%%%%%%%%%%%%%%%%%%%%%%%%%%%%%%%%%%%%%%%%%%%%%%%%%%%%%%%%%%%%%%%%%%%%%%%%%%%%%%%%%%%%%%%%%%%%%%%%%%%%%%%%%%%%%%%%%%%%%%%%%%%%%%%%%%%%%%%%%%%%%%%%%%%%%%%%%%%%%%%%%%%%%%%%%%%%%%%%%%%%%%%%%%%%%%%%%%%%%%%%%%%%%%%%%%%%%%%%%%%%%%%%%%%%%%%%%%%%%%%%%%%%%%%%%%%%%%%%%%%%%%%%%%%%%%%%%%%%%%%%%%%%%%%%%%%%%%%%%%%%%%%%%%%%%%%%%%%%%%%%%%%%%%%%%%%%%%%%%%%%%%%%%%%%%%%%%%%%%%%%%%%%%%%%%%%%%%%%%%%%%%%

\section{A Proof of the main result}

\begin{setup}\label{2.3}
Let $(R, \m)$ be a one-dimensional Cohen--Macaulay local ring with canonical module $\rmK_R$. We set $B = \m:_{\rmQ(R)}\m$, the endomorphism algebra of the maximal ideal $\m$, where $\rmQ(R)$ denotes the total quotient ring of $R$.

We fix a canonical ideal $I$ of $R$, and assume $I$ contains a parameter ideal $aR$ of $R$ as a reduction. Let 
$$
S =R[K], \text{ where } K = \frac{I}{a} = \left\{\frac{x}{a} ~\middle|~ x \in I\right\} \subseteq \rmQ(R).
$$ 
Furthermore we define the conductor as $\fkc = R:_{\rmQ(R)}S$. Notice $R \subseteq K \subseteq \overline{R}$, where $\overline{R}$ is the integral closure of $R$ in $\rmQ(R)$. 

Note $K \cong I \cong \rmK_R$, and both $B$ and $S$, being module finite extensions of $R$, are one-dimensional semi-local rings. Note also that $\e(R)=\e^{0}_{\m}(S)=\e^{0}_{\m}(B)$.
\pushQED{\qed} 
\qedhere
\popQED
\end{setup}

We will make use of the following theorems for our proof of Theorem \ref{3.1}, which is the key ingredient in our proof of Theorem \ref{intro1.2}. 

\begin{thm}[{\cite[5.1]{GMP}}]\label{2.4}
The following conditions are equivalent.
\begin{enumerate}[\rm(i)]
\item $R$ is an almost Gorenstein ring and $\ed(R) =\e(R)$.
\item $B$ is a Gorenstein ring.
\end{enumerate}
\end{thm}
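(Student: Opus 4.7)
The strategy is to work with a fractional canonical ideal $K \subseteq \rmQ(R)$ satisfying $R \subseteq K$, and to exploit the natural identification $K_B = \Hom_R(B, K_R) \cong K :_{\rmQ(R)} B$ as a $B$-module (with $B$ acting by multiplication). Under this identification, $B$ is Gorenstein precisely when $K :_{\rmQ(R)} B$ is principal as a $B$-ideal. The engine of the argument is the short exact sequence
$$
0 \to K_B \to K_R \to (B/R)^{\vee} \to 0,
$$
obtained by applying $\Hom_R(-, K_R)$ to $0 \to R \to B \to B/R \to 0$ and using that $B$ is maximal Cohen--Macaulay over $R$ (so $\Ext^1_R(B, K_R) = 0$), that $B/R$ has finite length while $K_R$ has depth one (so $\Hom_R(B/R, K_R) = 0$), and local duality (so $\Ext^1_R(B/R, K_R) \cong (B/R)^{\vee}$). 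Since $\m B = \m$, the quotient $B/R$ is annihilated by $\m$, and hence so is $(B/R)^{\vee}$; in particular both are $k$-vector spaces of dimension $\ell_R(B/R) = \mu_R(B) - 1$.

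For (i) $\Rightarrow$ (ii), I would split into cases. If $R$ is Gorenstein, the hypothesis $\ed(R) = \e(R)$ combined with the symmetry of the Gorenstein Hilbert function forces $\e(R) \le 2$, after which one checks $B$ is Gorenstein directly (either $B = R$ is a DVR or $B = R[\m/a]$ is a principal birational extension that is itself Gorenstein). If $R$ is almost Gorenstein but not Gorenstein, the fractional-ideal characterization $\m K \subseteq R$ gives $\m K = \m$, hence $K \subseteq \m :_{\rmQ(R)} \m = B$; the minimal multiplicity hypothesis yields $B = a^{-1}\m$ and $\m = aB$. The key identification is then $K :_{\rmQ(R)} B = \m$: the inclusion $\m \subseteq K : B$ is immediate from $\m B = \m \subseteq K$, while the reverse follows from a length count using $\ell_R(K/R) = r(R) - 1$ (from almost Gorenstein, via the injectivity condition in Definition \ref{def1.2}) and $r(R) = \e(R) - 1$ (from min mult), which forces $\ell_R(B/(K:B)) = \e(R) = \ell_R(B/\m)$. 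Hence $K_B \cong \m = aB$ is principal over $B$, and $B$ is Gorenstein.

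For (ii) $\Rightarrow$ (i), assume $B$ is Gorenstein, so $K :_{\rmQ(R)} B = \eta B$ is principal for some $\eta \in \rmQ(R)^{\times}$. The goal is to rescale $K$ by a suitable element of $\rmQ(R)^{\times}$ to obtain a canonical ideal realized with $R \subseteq K \subseteq B$ and $\eta B = \m$. Once this is achieved, $\m$ becomes a principal $B$-ideal, which is equivalent to $\m^2 = a\m$ (so $R$ has minimal multiplicity), and then $\m K \subseteq \m B = \m \subseteq R$ gives the almost Gorenstein condition. The minimal multiplicity is derived from $B$ being Gorenstein by combining the length identity $\ell_R(B/R) = \mu_R(B) - 1$ with the constraint on $\mu_R(B)$ imposed by the rigidity of the canonical module $K_B \cong B$.

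The principal obstacle throughout is the interplay between two natural normalizations of the fractional canonical ideal: Gorensteinness of $B$ produces $B \subseteq K$ under one rescaling (with $K : B = B$), while the almost Gorenstein condition demands $K \subseteq B$ under another (with $K : B = \m$). Bridging these views requires multiplying $K$ by a generator of $\m$ as a $B$-ideal, which exists exactly when $R$ has minimal multiplicity. The identification $K :_{\rmQ(R)} B = \m$, and the parallel derivation of minimal multiplicity from the Gorensteinness of $B$, are the technical heart of the proof.
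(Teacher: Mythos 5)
This statement is quoted in the paper from \cite[Theorem 5.1]{GMP} and is not proved there, so there is no in-paper argument to compare against; I am measuring your proposal against the original proof in \cite{GMP}, which works in essentially the framework you set up (a fractional canonical ideal $R\subseteq K\subseteq \overline{R}$, the identification $\rmK_B\cong K:_{\rmQ(R)}B$, and length counts via duality). Your direction (i) $\Rightarrow$ (ii) is essentially correct and is the standard argument: after disposing of the Gorenstein case ($\e(R)\le 2$), the condition $\m K\subseteq R$ gives $\m K=\m$, hence $K\subseteq B=\m:_{\rmQ(R)}\m=a^{-1}\m$, and the length count using $\ell_R(K/R)=\rmr(R)-1$ and $\rmr(R)=\e(R)-1$ pins down $K:_{\rmQ(R)}B=\m=aB$, so $\rmK_B$ is free and $B$ is Gorenstein. (Two small points you should still address: $B$ is in general only semilocal, so ``Gorenstein'' must be read locally at each maximal ideal, and the passage from ``$\rmK_B$ locally free of rank one'' to ``$K:_{\rmQ(R)}B$ is principal'' uses that the Picard group of a semilocal ring is trivial.)

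The direction (ii) $\Rightarrow$ (i), however, has a genuine gap: you never actually derive minimal multiplicity (or the almost Gorenstein condition) from the Gorensteinness of $B$. Your plan is to ``rescale $K$ so that $R\subseteq K\subseteq B$ and $\eta B=\m$,'' but, as you yourself observe in the last paragraph, a generator of $\m$ as a $B$-ideal exists exactly when $\m=aB$, i.e.\ exactly when $R$ has minimal multiplicity --- which is the conclusion to be proved, so the argument as sketched is circular. The only concrete input you offer for breaking this circle is the identity $\ell_R(B/R)=\mu_R(B)-1$ together with an unspecified ``constraint on $\mu_R(B)$ imposed by the rigidity of the canonical module.'' But $\ell_R(B/R)=\mu_R(B)-1$ holds for \emph{every} such $R$ that is not a DVR (it is just $\mu_R(B)=\ell_R(B/\m B)=\ell_R(B/\m)$, using $\m B=\m$), so it carries no information about whether $B$ is Gorenstein, and ``rigidity'' is not an argument. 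What is actually needed, and what \cite{GMP} supplies, is the identification $\rmK_B\cong K:_{\rmQ(R)}B=\m K$ (via $B=R:_{\rmQ(R)}\m=K:_{\rmQ(R)}\m K$ for non-DVR $R$, together with the reflexivity $K:(K:L)=L$ of fractional ideals with respect to $K$), so that Gorensteinness of $B$ yields $\m K=\xi B$ for a single element $\xi$; one must then prove $\xi B=\m$, from which both $\e(R)=\ed(R)$ (since $\m\cong B$ forces $\m^2=a\m$) and the almost Gorenstein condition $\m K=\m\subseteq R$ follow. That step --- showing the principal $B$-module $\m K$ is forced to coincide with $\m$ rather than merely contain it --- is the technical heart of this implication, and it is entirely absent from your proposal.
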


\begin{thm}[{\cite[4.18]{GK}}]\label{2.5}
Assume there is an element $x \in \m$ such that $\m^2=x\m$. Then the following conditions are equivalent.
\begin{enumerate}[\rm(i)]
\item $R$ is a generalized Gorenstein ring, but $R$ is not an almost Gorenstein ring.
\item $B$ is not a Gorenstein ring, but $B$ is a generalized Gorenstein local ring with maximal ideal $\n$ such that $\n^2=x \n$.
\end{enumerate}

When one of these equivalent conditions hold, we have $R/\m \cong B/\n$, and 
$$
\ell_B(B/(B:B[L]))=\ell_R(R/\fkc) -1,
$$
where $L=BK$.
\end{thm}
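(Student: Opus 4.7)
The plan is to transfer the generalized Gorenstein structure between $R$ and the module-finite birational extension $B = \m :_{\rmQ(R)} \m$ by realizing $L = BK$ as a canonical ideal of $B$ and by using the hypothesis $\m^2 = x\m$ to force $B$ to be local with maximal ideal $\n = \m B$ satisfying $\n^2 = x\n$. The underlying mechanism is that the defining short exact sequence of $R$ as a generalized Gorenstein ring should correspond to an analogous sequence for $B$.

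First I would set up the ring-theoretic framework: under $\m^2 = x\m$ one has $B = \frac{1}{x}\m$, a module-finite extension of $R$ contributing a single length unit at the level of conductors. Since $K \cong \rmK_R$ and $\rmK_B \cong \Hom_R(B, \rmK_R)$, the fractional $B$-ideal $L = BK$ is a canonical ideal of $B$, with $B \subseteq L$ inherited from $R \subseteq K$. For (i) $\Rightarrow$ (ii), I would begin with the sequence $0 \to R \to K \to C \to 0$ defining the generalized Gorenstein structure, where $C$ is Ulrich with respect to an $\m$-primary ideal $\fka$; since $R$ is not almost Gorenstein, analyzing the annihilator of $C$ identifies $\fka = \fkc$. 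Multiplying through by $B$, with left-exactness surviving because of the injectivity of $R/\fka \to K/\fka K$, produces the sequence $0 \to B \to L \to C \to 0$ of $B$-modules, and one verifies that $C$ remains Ulrich and satisfies the corresponding injectivity condition over $B$. Locality of $B$ then follows from the fact that a non-trivial product decomposition of $B$ would split $C$ in a manner incompatible with its Ulrich structure; once $B$ is local with maximal ideal $\n$, the equality $\n = \m B$ gives $\n^2 = \m^2 B = x\m B = x\n$. That $B$ is not Gorenstein follows from Theorem \ref{2.4} and the hypothesis that $R$ is not almost Gorenstein.

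The converse (ii) $\Rightarrow$ (i) reverses the construction: descend the $B$-level sequence to an $R$-level sequence via the inclusion $R \subseteq B$, and invoke Theorem \ref{2.4} to conclude that $R$ cannot be almost Gorenstein because $B$ is not Gorenstein. For the length formula $\ell_B(B/(B:B[L])) = \ell_R(R/\fkc) - 1$, I would note that $B[L] = S$, since $B \subseteq S$ and $L = BK \subseteq S$ are each visibly contained in $S$ and together generate it; hence $B:B[L] = B:S \supseteq \fkc$, and the quotient $(B:S)/\fkc$ has $R$-length one because the extension $R \subseteq B$ contributes exactly one unit to the conductor chain. Comparing $R$- and $B$-lengths using $B/\n \cong R/\m$ (itself a byproduct of $\n = \m B$ together with $\ell_R(B/\m B) = 1$) yields the stated equality. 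The main obstacle I expect is the precise identification $\fka = \fkc$, which is what truly distinguishes the not-almost-Gorenstein case from the almost-Gorenstein one, together with checking that both the Ulrich condition and the injectivity of the induced map survive the transfer from $R$ to $B$; both hinge on the delicate interplay between the generalized Gorenstein sequence and the length-one extension $R \subseteq B$ afforded by $\m^2 = x\m$.
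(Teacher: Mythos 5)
First, a point of order: the paper does not actually prove this statement --- it is quoted verbatim from \cite[4.18]{GK} (a paper listed as ``in preparation''), so there is no in-paper argument to compare yours against. Judged on its own merits, your proposal has the right general shape (realize $L=BK$ as a canonical ideal of $B$, transfer the generalized Gorenstein structure across the length-one-type extension $R\subseteq B$, and invoke Theorem \ref{2.4} for the Gorenstein/almost-Gorenstein dichotomy), but its central mechanism is wrong.

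The step that fails is ``multiplying $0\to R\to K\to C\to 0$ through by $B$ produces $0\to B\to L\to C\to 0$ with the same $C$, which remains Ulrich.'' Since $\m^2=x\m$ gives $B=\frac{\m}{x}$ and $L=BK=\frac{\m K}{x}$, multiplication by $x$ identifies $L/B$ with $\m K/\m$, and
$$
\ell_R(\m K/\m)=\ell_R(K/\m)-\ell_R(K/\m K)=\bigl(\ell_R(K/R)+1\bigr)-r(R)=\ell_R(C)-\bigl(r(R)-1\bigr)<\ell_R(C),
$$
because $R$ is not Gorenstein, so $r(R)\ge 2$. Thus the cokernel strictly shrinks and cannot be $C$; a sanity check is the (excluded but instructive) almost Gorenstein non-Gorenstein case, where $\m K\subseteq R$ forces $L=B$ and the cokernel vanishes while $C\neq 0$. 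Consequently everything you defer to ``one verifies'' --- that the \emph{new} cokernel $\m K/\m$ is Ulrich over $B$ with respect to $B:B[L]$, that the induced map $B/(B{:}B[L])\to L/(B{:}B[L])L$ is injective, and, separately, that $B$ is local at all (it is only semi-local in general; your claim that a product decomposition of $B$ would ``split $C$'' does not parse, since $C=K/R$ is not a $B$-module) --- is precisely the content of the theorem and is left unproved. The same applies to the converse ``descent'' and to the length formula, where ``contributes exactly one unit to the conductor chain'' is an assertion, not an argument. A workable route would instead go through the quantitative characterizations of generalized Gorenstein rings via $S/K$ and the conductor (as in \cite[Section 5]{CGKM}), rather than through a literal transport of the defining exact sequence.
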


%Next is a very special case of a result of Hayasaka and Hyry \cite{}; this result will be used for the proofs of Theorem \ref{3.1} and Corollary \ref{77}.

%\begin{thm}\label{HH} (Hayasaka and Hyry \cite[1.1]{}) If $\q$ is a parameter ideal of $R$, then $\ell_R(R/\q) \ge  \rme^0_{\q}(R)$.
%\end{thm}

\begin{thm}\label{3.1}
Assume $\rme(R) \ge 3$. Then the following conditions are equivalent.
\begin{enumerate}[\rm(i)]
\item $R$ is a generalized Gorenstein ring with minimal multiplicity.
\item $S$ is Gorenstein, and there is an integer $N>0$ such that the following hold:
\begin{enumerate}[\rm(a)]
\item $S=R_N$.
\item For each integer $n=0, \ldots, N-1$, it follows $R_n$ is a local ring such that $\ed(R_n) = \rme(R_n) = \rme(R)$.
\end{enumerate}
\end{enumerate}
Furthermore, if condition (ii) holds, then we have $N = \ell_R(R/\fkc)$.
\end{thm}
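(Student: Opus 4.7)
The strategy is a double induction, using Theorems \ref{2.4} and \ref{2.5} as the main tools. Observe first that since $\rme(R)\ge 3$, the ring $R$ cannot be Gorenstein: a one-dimensional Gorenstein local ring of minimal multiplicity has Cohen--Macaulay type one, forcing $\rme(R)=2$. In particular the Ulrich embedding of Definition \ref{def1.2} is nontrivial throughout. The base case of both inductions will be the almost Gorenstein situation: there $\ed(R)=\rme(R)$ combined with Theorem \ref{2.4} gives that $B$ is Gorenstein, the identification $S=B$ follows from \cite[3.16]{GMP}, and $\m^2=x\m$ yields $R_1=R[\m/x]=\m/x=B$; so $N=1$ meets all the requirements, and one checks directly that this case corresponds to $\ell_R(R/\fkc)=1$ (since $S=B$ forces $\fkc=\m$).

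For the implication $(i)\Rightarrow(ii)$ in general I plan to induct on $\ell_R(R/\fkc)$. Assume $R$ is generalized Gorenstein but not almost Gorenstein, and pick $x\in\m$ with $\m^2=x\m$. Theorem \ref{2.5} then guarantees that $B=R_1$ is local, generalized Gorenstein but not Gorenstein, and has minimal multiplicity via $\n^2=x\n$; the same theorem records the strict decrease $\ell_B(B/(B:B[L]))=\ell_R(R/\fkc)-1$, where $L=BK$. The key technical observation---and the step I expect to be the main obstacle---is that $L=BK$ is a fractional canonical ideal of $B$ admitting $aB$ as a reduction, and that $B[L]=S$. The containment $S\subseteq B[L]$ is immediate from $R\subseteq B$, while the opposite containment amounts to $B\subseteq R[K]=S$, i.e.\ to showing that $B$ acts on $K$ so that $B\subseteq K:K$; this I will extract from the structural results of \cite{GK} in the generalized Gorenstein setting. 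Granting this, the inductive hypothesis applied to $B$ produces $N'>0$ with $S=B_{N'}$ and the desired structure on $B_0,\ldots,B_{N'-1}$; since $B_n=R_{n+1}$ and $\rme(B)=\rme(R)$ (the residue field extension $R/\m\hookrightarrow B/\n$ being trivial by Theorem \ref{2.5}), taking $N=N'+1$ completes the step.

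For $(ii)\Rightarrow(i)$ I induct on $N$. The case $N=1$ reads $S=R_1=B$ is Gorenstein with $\ed(R)=\rme(R)$, so Theorem \ref{2.4} delivers $R$ almost Gorenstein. For $N\ge 2$, the hypothesis makes $R_1=B$ local with $\ed(B)=\rme(B)=\rme(R)\ge 3$ and realizes $S=B_{N-1}$ with the intermediate $B_n$'s local of minimal multiplicity; the inductive hypothesis applied to $B$ yields that $B$ is generalized Gorenstein with minimal multiplicity. Since $B$ cannot be Gorenstein ($N=1$ would otherwise force $S=B$), Theorem \ref{2.5} in its $(ii)\Rightarrow(i)$ direction concludes that $R$ is generalized Gorenstein. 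Finally, the equality $N=\ell_R(R/\fkc)$ falls out by induction from the length decrease in Theorem \ref{2.5}, together with the base case $\ell_R(R/\fkc)=1$ recorded above.
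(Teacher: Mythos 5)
Your overall strategy coincides with the paper's: a double induction (on $\ell_R(R/\fkc)$ for (i)$\Rightarrow$(ii) and on $N$ for (ii)$\Rightarrow$(i)) whose engine is Theorems \ref{2.4} and \ref{2.5}, with the passage from $R$ to $B=R_1$ mediated by the fact that $L=BK$ is a canonical fractional ideal of $B$ with $S=B[L]$. The paper, like you, does not prove that last fact from scratch but cites \cite[5.1]{CGKM} for it. Two points, however, need repair.

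First, in the step you yourself single out as the main obstacle, your reformulation is wrong: showing $B\subseteq R[K]=S$ is \emph{not} the same as showing $B\subseteq K:_{\rmQ(R)}K$. Since $K\cong \rmK_R$ and $R$ is Cohen--Macaulay, one has $K:_{\rmQ(R)}K=\End_R(\rmK_R)=R$, so $B\subseteq K:K$ would force $B=R$, i.e.\ $R$ a DVR, contradicting $\rme(R)\ge 3$. What is actually needed is the much weaker containment $B\subseteq \bigcup_{n\ge 1}K^n$, and this is precisely the nontrivial content of \cite[5.1]{CGKM} (it uses the generalized Gorenstein and minimal multiplicity hypotheses); as written, your plan to ``extract $B\subseteq K:K$'' cannot succeed. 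Second, in the direction (ii)$\Rightarrow$(i) you invoke Theorem \ref{2.5}(ii)$\Rightarrow$(i), but condition (ii) of that theorem requires $\n^2=x\n$ for the \emph{same} element $x$ chosen with $\m^2=x\m$; hypothesis (ii)(b) of the present theorem only gives $\ed(B)=\rme(B)$, i.e.\ $\n^2=y\n$ for some $y\in\n$. The paper bridges this by the chain
$$
\rme(R)=\rme^0_{\m}(B)=\ell_R(B/xB)\ \ge\ \ell_B(B/xB)\ \ge\ \rme^0_{\n}(B)=\rme(R),
$$
which forces $R/\m\cong B/\n$ and, by Rees's theorem, makes $xB$ a reduction of $\n$, whence $\n^2=x\n$. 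You should add this verification (or an equivalent one) before appealing to Theorem \ref{2.5}.
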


\begin{proof}
We set $\ell=\ell_R(R/\fkc)$. By \cite[3.5, 3.7]{GMP}, let us remark that $R$ is Gorenstein if and only if $\ell=0$. Besides, by \cite[3.5, 3.16]{GMP}, $R$ is a non-Gorenstein almost Gorenstein ring is equivalent to $\ell=1$.

$(i) \Rightarrow (ii)$ Since $\ed(R) = \rme(R)$, we choose $x \in \m$ such that $\m^2 = x \m$. Set $N=\ell = \ell_R(R/\fkc)$. 
As $\rme(R) \ge 3$, $R$ is not a Gorenstein ring, so that $\ell>0$. 
If $\ell=1$, then $R$ is an almost Gorenstein ring and $S=\m:_{\rmQ(R)}\m$ is a Gorenstein ring by Theorem \ref{2.4}. Suppose that $\ell > 1$ and the assertion holds for $\ell -1$. We then have $R$ is not an almost Gorenstein ring. By Theorem \ref{2.5}, $B$ is a generalized Gorenstein ring, but not Gorenstein. Furthermore, $\n^2 = x \n$ and $R/\m \cong B/\n$, where $\n$ denotes the maximal ideal of $B$. 
Hence we have:
$$
\rme(R) = \rme^0_{\m}(B)=\ell_R(B/xB)=\ell_B(B/xB) = \rme^0_{\n}(B).
$$
Note that $L=KB$ is a $B$-submodule of $\rmQ(B)$ such that $B \subseteq L \subseteq \overline{B}$, $L \cong \rmK_B$, and $S=B[L]$; see \cite[5.1]{CGKM} for the details. Therefore, we get
$$
\ell_B(B/\fka) = \ell_R(R/\fka) -1 = \ell -1,
$$
where $\fka = B:_{\rmQ(R)}B[L]$ denotes the conductor of $B[L]$. Hence, by induction hypothesis, $S = B[L]$ is a Gorenstein ring and $R_n=B_{n-1}$ is a local ring with minimal multiplicity $\rme(R)$ for every $1 \le n < \ell$. 

$(ii) \Rightarrow (i)$ If $N=1$, then $S=\m:_{\rmQ(R)}\m$, since $\m$ is stable. Hence, by Theorem \ref{2.4}, the Gorensteinness of $S$ implies that $R$ is an almost Gorenstein ring and $\ell=N$. Suppose that $N>1$ and the assertion holds for $N-1$. Since $R=R_0$ has minimal multiplicity, there exists $x \in \m$ such that $\m^2 = x\m$. Then, since $N \ge 2$, it follows 
$$
R_1 = \m:_{\rmQ(R)}\m=B
$$
is a local ring with minimal multiplicity $\rme(R)$. Note $xB$ is a parameter ideal of $B$. Moreover we have
$$
\rme(R) = \rme^0_{\m}(R) = \rme^0_{\m}(B) = \ell_R(B/x B) \ge \ell_B(B/x B) \ge \rme^0_{\n}(B) =\rme(R),
$$
where $\n$ denotes the maximal ideal of $B$. It follows that $R/\m \cong B/\n$. Thus, by \cite{Rees}, $x B$ is a reduction of $\n$, whence $\n^2 = x \n$. Therefore the induction arguments shows $B$ is a generalized Gorenstein ring. Since $B$ is not a Gorenstein ring, by Theorem \ref{2.5}, we see $R$ is a generalized Gorenstein ring, but not an almost Gorenstein ring, and also
$$
\ell_B(B/\fka) = \ell -1,
$$
where $\fka = B:B[KB]$. Note that $R_n$ has minimal multiplicity $\rme(R)$ for every $1 \le n <N$, so is $B_{n-1}$. By the induction hypothesis, we conclude that $N-1 = \ell -1$, as desired.
\end{proof}

We are now ready to prove our main result, namely Theorem \ref{intro1.2} advertised in the introduction.

\begin{proof}[Proof of Theorem \ref{intro1.2}] Notice, if $R$ is Arf, then the maximal ideal $\m$ is stable so that $R$ has minimal multiplicity; see Definition \ref{st}. Therefore, throughout, we may assume $R$ has minimal multiplicity. We may also assume $\e(R)\ge 3$ by Proposition \ref{3.2}.
So, by Theorem \ref{3.1}, we conclude $S$ is Gorenstein, $S=R_N$ for some positive integer $N$, and
the blowup $R_n$ of $R$ is a local ring with minimal multiplicity for each integer $n=0, \ldots, N-1$.
 
$(i) \Rightarrow (ii)$: Assume $R$ is Arf. Then, for each nonnegative integer $n$ and each maximal ideal $\mathcal{M}$ of $R_n$, it follows from Proposition \ref{2.2} that $\ed((R_n)_\mathcal{M}) = \rme((R_n)_\mathcal{M})$. Since $S_v=R_{N+v}$ for each nonnegative integer $v$, we conclude, using Proposition \ref{2.2} once more, that $S$ is Arf. One can now observe that $S_{\mathcal{M}}$ has minimal multiplicity for all maximal ideals $\mathcal{M}$ of $S$. Thus the Cohen-Macaulay type of $S_{\mathcal{M}}$ equals $\e(S_{\mathcal{M}})-1$, provided $S_{\mathcal{M}}$ is not regular. In particular, since $S$ is Gorenstein, we obtain $e(S_{\mathcal{M}})\leq 2$.

$(ii) \Rightarrow (i)$: Assume $R$ has minimal multiplicity and $\e(S_{\mathcal{M}})\leq 2$ for each maximal ideal $\mathcal{M}$ of $S$. Letting $C=S$, we see from Proposition \ref{3.2} that $S$ is an Arf ring. Letting $A=S$ in Proposition \ref{2.2}, we conclude that all the localizations of the blowups of $S$ have minimal multiplicity, i.e., for each nonnegative integer $l$, and for each maximal ideal $\mathcal{N}$ of $S_{l}$, the ring $(S_{l})_{\mathcal{N}}$ has minimal multiplicity. Since $S=R_{N}$, this property is also true for the blowups of $R$, which are local rings. Therefore $\ed(R_n)=\e(R_n)$ for each integer $n=0, \ldots, N-1$. Finally we deduce from Proposition \ref{2.2} that $R$ is Arf.
\end{proof}

%%%%%%%%%%%%%%%%%%%%%%%%%%%%%%%%%%%%%%%%%%%%%%%%%%%%%%%%%%%%%%%%%%%%%%%%%%%%%%%%%%%%%%%%%%%%%%%%%%%%%%%%%%%%%%%%%%%%%%%%%%%%%%%%%%%%%%%%%%%%%%%%%%%%%%%%%%%%%%%%%%%%%%%%%%%%%%%%%%%%%%%%%%%%%%%%%%%%%%%%%%%%%%%%%%%%%%%%%%%%%%%%%%%%%%%%%%%%%%%%%%%%%%%%%%%%%%%%%%%%%%%%%%%%%%%%%%%%%%%%%%%%%%%%%%%%%%%%%%%%%%%%%%%%%%%%%%%%%%%%%%%%%%%%%%%%%%%%%%%%%%%%%%%%%%%%%%%%%%%%%%%%%%%%%%%%%%%%%%%%%%%%%%%%%%%%%%%%%%%%%%%%%%%%%%%%%%%%%%%%

\section{Corollaries of the main argument}

In this section we maintain the notations of Setup \ref{2.3}. We give applications of our argument and  obtain new characterizations of Arf rings. In particular, we extend a result of Barucci and Fr\"oberg \cite{BF} and determine certain conditions that make the idealization $R \ltimes \fkc$ to be a generalized Gorenstein Arf ring; see Corollaries \ref{3.4}, \ref{3.7} and Theorem \ref{4.1}.

We start by giving two examples that show Arf and generalized Gorenstein properties are independent of each other, in general.

\begin{ex} \label{ornek1} Let $k$ be a field and set $R= k[\![t^3,t^7,t^{11}]\!]$. Then $R$ is a Cohen-Macaulay, non-Gorenstein ring of Cohen-Macaulay type two. We proceed and prove that $R$ is almost Gorenstein, and hence generalized Gorenstein, but is not Arf; see Definitions \ref{def1.3} and \ref{Cahit}.

Note that $K_R=R+Rt^4$. As $\m K_R=\m t^4 \subseteq R$, we conclude from \cite[3.11]{GMP} that $R$ is almost Gorenstein.

To show $R$ is not Arf, we compute the blowup of $R$ at $\m$:
\begin{equation}\tag{\ref{ornek1}.1}
R_1=R^{\m}=R\left[ \frac{\m}{t^3}\right]=k[\![t^3,t^4]\!]
\end{equation}
Here, in (\ref{ornek1}.1), the second equality holds since $t^3$ is a reduction of $\m$; see the discussion preceding Definition \ref{st}. As $R_1$ does not have minimal multiplicity, $R$ is not Arf by Proposition \ref{2.2}.
\end{ex}

\begin{ex}\label{ornek2} Let $k$ be a field and set $R= k[\![t^4,t^7,t^9,t^{10}]\!]$. Then $R$ is not generalized Gorenstein ring; see \cite[4.27]{GK}. To see $R$ is Arf, we proceed as in Example \ref{ornek1}.
\begin{equation}\tag{\ref{ornek2}.1}
R_1=R^{\m}=R\left[ \frac{\m}{t^4}\right]=k[\![t^3,t^4, t^5]\!]
\end{equation}
Here, in (\ref{ornek2}.1), the second equality holds since $t^4$ is a reduction of $\m$. Letting $\mathfrak{m_1}$ to be the unique maximal ideal of $R_1$, we get:
\begin{equation}\tag{\ref{ornek2}.2}
R_2=R_1^{\m}=R\left[ \frac{\mathfrak{m_1}}{t^4}\right]=k[\![t]\!].
\end{equation}
The second equality in (\ref{ornek2}.2) holds since $\mathfrak{m_1}^2=t^3\mathfrak{m_1}$. As each blowup of $R$ is contained in the integral closure $\overline{R}=k[\![t]\!]$ of $R$, we conclude that $R=R_{0}$, $R_1$ and $R_2=\overline{R}$ are the only distinct blowups of $R$. Since each of these blowups have minimal multiplicities, we see that $R$ is Arf by Proposition \ref{2.2}. $\qed$
\end{ex} 

Note that it follows from the definition that $S$ is a local ring in case $R$ is a numerical semigroup ring.
Hence the ring $S$ in Example \ref{ornek1} is local, whilst $R$ is a non-Arf ring with minimal multiplicity three. The next corollary of Theorem \ref{intro1.2} shows that such a ring $R$ must be Arf in case $S$ is not local.

\begin{cor} \label{77} Assume $\ed(R)=\e(R)=3$, i.e., $R$ has minimal multiplicity three. If $S$ is not a local ring, then $R$ is an Arf ring. 
\end{cor}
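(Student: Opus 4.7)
The plan is to invoke Theorem \ref{intro1.2}. Its hypotheses are that $R$ be one-dimensional generalized Gorenstein local with a canonical ideal containing a parameter reduction, that $R$ have minimal multiplicity, and that $\e(S_{\mathcal{M}}) \le 2$ for every $\mathcal{M} \in \Max(S)$; the minimal multiplicity is part of the hypothesis of the corollary.

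I would first verify the local multiplicity bound using the associativity formula for multiplicities. Because $R \subseteq S$ is a module-finite integral extension (recall $S \subseteq \overline{R}$), and every maximal ideal of $S$ contracts to $\m$, one has
$$
3 = \e(R) = \e^0_{\m}(S) = \sum_{\mathcal{M} \in \Max(S)} \e^0_{\m S_{\mathcal{M}}}(S_{\mathcal{M}}) \cdot [S/\mathcal{M} : R/\m].
$$
Each summand is a positive integer, and since $S$ is not local there are at least two summands; so every summand is at most $2$, yielding $\e(S_{\mathcal{M}}) \le \e^0_{\m S_{\mathcal{M}}}(S_{\mathcal{M}}) \le 2$ for every $\mathcal{M} \in \Max(S)$.

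I would next verify the generalized Gorenstein hypothesis. The bound just obtained forces each localization $S_{\mathcal{M}}$ to be Gorenstein, since a one-dimensional Cohen--Macaulay local ring of multiplicity at most two (with embedding dimension bounded by its multiplicity) has Cohen--Macaulay type at most one. Thus $S$ itself is Gorenstein. Invoking the direction $(\mathrm{ii}) \Rightarrow (\mathrm{i})$ of Theorem \ref{3.1}---together with the identification $S = R_N$ for some positive integer $N$ and the local minimal multiplicity of the intermediate blowups $R_0, \ldots, R_{N-1}$---I conclude that $R$ is a generalized Gorenstein ring with minimal multiplicity.

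The main obstacle I foresee is this structural identification $S = R_N$ together with the accompanying check on the intermediate blowups. The constraint $\e(R) = 3$ forces the blowup chain to be short: if $R$ happens to be almost Gorenstein, then $N = 1$ and $S = B = \m/x$ by the comment preceding Corollary \ref{intro 3.4}; otherwise the chain length is governed by $\ell_R(R/\mathfrak{c})$ and the verification amounts to a short iteration analogous to that in the proof of Theorem \ref{3.1}. Once all the hypotheses are in place, Theorem \ref{intro1.2} concludes that $R$ is an Arf ring.
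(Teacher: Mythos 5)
Your derivation of the bound $\e(S_{\mathcal{M}})\le 2$ is correct and is essentially the paper's own computation: the paper writes $3=\e(R)=\e^0_{xR}(S)=\ell_R(S/xS)\ge\ell_S(S/xS)=\sum_{\M\in\Max(S)}\ell_{S_\M}(S_\M/xS_\M)\ge\sum_{\M\in\Max(S)}\e(S_\M)$ and uses the non-locality of $S$ to force each summand to be at most $2$; your associativity formula with the residue degrees $[S/\mathcal{M}:R/\m]$ packages the same inequality.

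The genuine gap is in your verification that $R$ is a generalized Gorenstein ring, which is an honest hypothesis of Theorem \ref{intro1.2} and cannot be bypassed. You propose to get it from the implication $(\mathrm{ii})\Rightarrow(\mathrm{i})$ of Theorem \ref{3.1}, but that implication takes as \emph{hypotheses} precisely the facts you defer: that $S=R_N$ for some $N$ and that each intermediate blowup $R_0,\dots,R_{N-1}$ is local of minimal multiplicity $\e(R)$. You never establish these, and the ``short iteration analogous to that in the proof of Theorem \ref{3.1}'' you invoke is the $(\mathrm{i})\Rightarrow(\mathrm{ii})$ direction, whose induction runs through Theorem \ref{2.5} and therefore already assumes $R$ is generalized Gorenstein --- so the argument is circular. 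The deferral is not harmless: minimal multiplicity alone does not imply generalized Gorenstein, nor does it imply $S=R_N$ with the required properties of the intermediate blowups (the paper's Example \ref{ornek2}, $k[\![t^4,t^7,t^9,t^{10}]\!]$, has minimal multiplicity, is Arf, and is not generalized Gorenstein). What makes the corollary work is the specific hypothesis $\ed(R)=\e(R)=3$: the paper deduces generalized Gorensteinness directly from the existence of $x$ with $\m^2=x\m$ together with the cited result \cite[4.8]{GK}, and only then applies Theorem \ref{intro1.2}. Without that (or some substitute argument exploiting $\e(R)=3$), your proof does not close.
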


\begin{proof} Note that, since $R$ has minimal multiplicity, there exists an element $x \in \m$ such that $\m^2 = x \m$. We conclude by \cite[4.8]{GK} that $R$ is a generalized Gorenstein ring. Therefore we have:
\begin{align}\notag{}
3  = \e(R) =   \e^0_{\m}(S) = \e^0_{xR}(S) = \ell_R(S/ x S)  & \ge  \ell_S(S/ x S)  &  \\& =  \sum_{\M \in \Max(S)} \ell_{S_{\M}}(S_{\M}/x S_{\M}) \notag{} \\&  \ge \sum_{\M \in \Max(S)} \e(S_{\M}), \notag{}
\end{align}
where $\Max(S)$ denotes the set of all maximal ideals of $S$. %Here the second inequality of (\ref{77}.1) is due to Theorem \ref{HH}. 
Since $S$ is not local, there are at least two distinct ideals in $\Max(S)$.
This implies that $\e(S_{\M}) \le 2$ for each $\mathcal{M} \in \Max(S)$. Now, since $R$ is a generalized Gorenstein ring with minimal multiplicity, it follows from Theorem \ref{intro1.2} that $R$ is Arf.
\end{proof}

Next we recall a beautiful result of Barucci and Fr\"oberg \cite[13]{BF} that gives a characterization of  almost Gorenstein Arf numerical semigroup rings.

%Let $V=k[[t]]$ be the  formal power series ring over a field $k$. We set
\begin{thm}[see {\cite[13]{BF}}]\label{3.5}
Let $\ell$ be a positive integer and $0 < a_1 < a_2 < \cdots < a_{\ell}$ be integers such that $\gcd(a_1, a_2, \ldots, a_{\ell})=1$. Let $k$ be a field, $R=k[\![t^{a_1}, t^{a_2}, \ldots, t^{a_\ell}]\!]$ be the numerical semigroup ring over $k$, and let $H=\left<a_1, a_2, \ldots, a_{\ell}\right>$ be the corresponding semigroup.
Then the following are equivalent.
\begin{enumerate}[\rm(i)]
\item $R$ is an almost Gorenstein Arf ring.
\item $2+a_i \in H$ for each $i=1, \ldots, \ell$. 
\end{enumerate}
\end{thm}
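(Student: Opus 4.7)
The plan is to deduce Theorem~\ref{3.5} directly from Corollary~\ref{intro 3.4}, which characterizes the almost Gorenstein Arf property by the condition $\e(B_{\mathcal{M}}) \leq 2$ for every maximal ideal $\mathcal{M}$ of $B = \m :_{\rmQ(R)} \m$. Since $R = k[\![t^{a_1}, \ldots, t^{a_\ell}]\!]$ is a one-dimensional complete Cohen--Macaulay local domain with integral closure $\overline{R} = k[\![t]\!]$, and $B$ is a module-finite extension of $R$ contained in $\overline{R}$, it is routine to check that $B$ is local with unique maximal ideal $B \cap t\overline{R}$. Thus the task reduces to showing $\e(B) \leq 2$ if and only if $2 + a_i \in H$ for every $i$.

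Next I would obtain a monomial description of $B$. Setting $H^{*} = H \setminus \{0\}$, the ideal $\m$ has $\{t^h : h \in H^{*}\}$ as a topological $k$-basis. A direct computation shows that for a Laurent series $f = \sum c_n t^n \in \rmQ(R)$, the condition $f \m \subseteq \m$ amounts to requiring that every exponent $n$ in the support of $f$ satisfies $n + a_i \in H^{*}$ for each $i$ (equivalently $n + H^{*} \subseteq H^{*}$, using that the $a_i$ generate $H$). Set
\[
H' := \{\, n \in \mathbb{Z} : n + a_i \in H \text{ for every } i = 1, \ldots, \ell \,\}.
\]
Any $n \in H'$ satisfies $n + a_1 \in H^{*}$, and since $a_1$ is the smallest nonzero element of $H$, necessarily $n \geq 0$. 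Hence $H'$ is a numerical semigroup containing $H$, the ring $B$ is the $(t)$-adic completion inside $k[\![t]\!]$ of the monomial $k$-span of $\{t^n : n \in H'\}$, and the multiplicity $\e(B)$ equals the smallest positive element of $H'$.

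The equivalence now follows at once: $\e(B) \leq 2$ holds iff $1 \in H'$ or $2 \in H'$, and since $H'$ is closed under addition, $1 \in H'$ already forces $2 \in H'$. Thus the condition collapses to $2 \in H'$, which is precisely $2 + a_i \in H$ for each $i = 1, \ldots, \ell$; combining this with Corollary~\ref{intro 3.4} yields the theorem. The main obstacle is the explicit monomial description of $B$ in the second paragraph, which relies on $\m$ being a monomial ideal and on completeness to guarantee that $B$ is exactly the closure of the obvious monomial submodule (in particular, that no exotic power series can appear and that $H'$ contains no negative integers). Once this description is in hand, the rest is elementary combinatorics, and the theorem falls out as a clean corollary of the homological characterization in Corollary~\ref{intro 3.4}.
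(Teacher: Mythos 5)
Your proposal is correct and takes essentially the same route as the paper: both deduce the theorem from Corollary \ref{3.4} by identifying $B=\m:_{\rmQ(R)}\m$ as a (local) numerical semigroup ring and translating $\e(B)\le 2$ into $2+a_i\in H$ for all $i$. The only cosmetic difference is that you compute $\e(B)$ as the least positive element of the explicit value semigroup $H'$, whereas the paper bounds $\e(B)$ by the length $\ell_B(\overline{R}/t^2\overline{R})=2$; the substance is the same.
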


In Corollary \ref{3.4} and Proposition \ref{3.7}, we will obtain natural generalizations of Theorem \ref{3.5}. These results will be useful to construct new examples of almost Gorenstein Arf rings; see Examples \ref{ornek3} and \ref{ornek4}. %Note that, if $R$ is almost Gorensein, but not Gorenstein, then $S = B=\m:_{\rmQ(R)}\m$; see \cite[3.16]{GMP}.

Recall that, in the following, and unless otherwise stated, we maintain the notations of Setup \ref{2.3}.

\begin{cor}\label{3.4} The following conditions are equivalent.
\begin{enumerate}[\rm(i)]
\item $R$ is an almost Gorenstein Arf ring.
\item $\e(B_{\N}) \le 2$ for each maximal ideal $\N$ of $B$. 
\end{enumerate}
\end{cor}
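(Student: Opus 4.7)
I would deduce the corollary by combining Theorem \ref{intro1.2} with Theorem \ref{2.4}, together with the identification $B=S$ which holds whenever $R$ is almost Gorenstein but not Gorenstein (see \cite[3.16]{GMP}). The argument splits naturally according to whether $R$ is Gorenstein or not.

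For (i) $\Rightarrow$ (ii), the Arf hypothesis forces $\m$ to be stable, so $R$ has minimal multiplicity. If $R$ is not Gorenstein, then $R$ is a non-Gorenstein almost Gorenstein ring, so $B=S$, and Theorem \ref{intro1.2} yields $\e(B_\N)=\e(S_\N) \le 2$ for each maximal ideal $\N$ of $B$. If $R$ is Gorenstein, then having minimal multiplicity forces $\e(R) \le 2$, since otherwise the Cohen--Macaulay type of $R$ would exceed one. Using the rank-one inclusion $R\subseteq B \subseteq \rmQ(R)$ and any parameter $x\in\m$, the chain $\e(R) = \e^0_\m(B) = \ell_R(B/xB) \ge \sum_{\N \in \Max(B)}\ell_{B_\N}(B_\N/xB_\N) \ge \sum_{\N\in\Max(B)}\e(B_\N)$ yields $\e(B_\N)\le 2$ for each $\N$.

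For (ii) $\Rightarrow$ (i), I would first argue that every $B_\N$ is Gorenstein: since $B_\N$ is one-dimensional Cohen--Macaulay with $\e(B_\N)\le 2$, its quotient by a superficial parameter is Artinian local of length at most two and hence Gorenstein, and this lifts to $B_\N$ by Cohen--Macaulayness. Therefore $B$ is Gorenstein, and Theorem \ref{2.4} delivers the almost Gorenstein property of $R$ together with $\ed(R)=\e(R)$. To conclude that $R$ is Arf, I would apply Theorem \ref{intro1.2}: it remains to verify $\e(S_\M)\le 2$ for every $\M\in\Max(S)$. If $R$ is Gorenstein, then the canonical ideal $I$ is principal and the reduction condition $(a)\subseteq I$ forces $I=aR$, whence $K=R$ and $S=R$, so $\e(S_\M)=\e(R)\le 2$; if $R$ is not Gorenstein then $B=S$ and the bound is exactly the hypothesis.

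The main obstacle is the step in (ii) $\Rightarrow$ (i) that promotes the pointwise multiplicity bound on $B$ to the Gorenstein property and then invokes Theorem \ref{2.4} to recover the almost Gorenstein property of $R$. The remaining bookkeeping around the Gorenstein/non-Gorenstein dichotomy, in particular the collapse $S=R$ in the Gorenstein case, is routine once this reduction is in hand.
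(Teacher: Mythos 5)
Your proposal is correct and follows essentially the same route as the paper's proof: both directions hinge on Theorem \ref{2.4}, Theorem \ref{intro1.2}, and the identification $B=S$ from \cite[3.16]{GMP} in the non-Gorenstein case, differing only in how the Gorenstein/$\e(R)\le 2$ edge cases are dispatched (the paper clears them up front via Proposition \ref{3.2} and \cite[12.2]{GTT}). One small point of care: in your multiplicity chain the equality $\e(R)=\ell_R(B/xB)$ requires $x$ to generate a reduction of $\m$ rather than being an arbitrary parameter, which is indeed available at that point because $R$ has minimal multiplicity.
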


\begin{proof} First assume $\e(R)\leq 2$. Then  it follows from Proposition \ref{3.2} that $R$ is Arf. Hence, since $R$ is Gorenstein, (i) follows. If $\e(R)=1$, then $R$ is regular so that $B=R=\overline{R}$; in particular (ii) holds. Moreover, if $\e(R)=2$, then (ii) follows from \cite[12.2]{GTT}. Consequently we may assume $\e(R)\geq 3$.

$(i) \Rightarrow (ii)$ Assume $R$ is Arf. Then $R$ has minimal multiplicity so that $\m$ is stable and $B=R_1=R^{\m}$; see Definition \ref{st} and Notation \ref{notation}. Therefore, by Proposition \ref{2.2}, $B_{\N}$ has minimal multiplicity for each maximal ideal $\N$ of $B$. This implies that $B$ is Gorenstein. Consequently, given a maximal ideal $\N$ of $B$, since $B_{\N}$ is a Gorenstein ring with minimal multiplicity, we conclude that $\e(B_{\N})\leq 2$.

$(ii) \Rightarrow (i)$ Assume $\e(B_{\N}) \le 2$ for each maximal ideal $\N$ of $B$. Then $B$ is Gorenstein. Hence, by Theorem \ref{2.4}, $R$ is an almost Gorenstein ring with minimal multiplicity.

Now, if $R$ is Gorenstein, then $R$ is a hypersurface with $\e(R)\leq 2$. In particular, $R$ is Arf by Proposition \ref{3.2}. If $R$ is not Gorenstein, since it is almost Gorenstein, we have from \cite[3.16]{GMP} that $B=S$. Hence, by the hypothesis, $\e(S_{\N})\leq 2$ for each maximal ideal $\N$ of $S$. So $R$ is Arf by Theorem \ref{intro1.2}.
\end{proof}

In passing, we give a short proof of Theorem \ref{3.5} which is different from the argument of Barucci and Fr\"oberg \cite{BF}. We will use Corollary \ref{3.4} and the fact that, if $R$ is a numerical semigroup ring as in Theorem \ref{3.5}, then $B=\m:_{\rmQ(R)}\m$ is also a numerical semigroup ring (in particular, $B$ is local); see, for example, \cite[2.13]{GS}.

%\begin{lem}(\cite[see ??]{CGKM}) \label{num} Assume $R$ is a numerical semigroup ring as in Theorem \ref{3.5}. Then $B=\m:_{\rmQ(R)}\m$ is also a numerical semigroup ring. In particular $B$ is local.
%\end{lem}

\begin{proof} [A proof of Theorem \ref{3.5}] Assume $R$ is an almost Gorenstein Arf ring. Then it follows from Corollary \ref{3.4} that $\e(B)\leq 2$. Therefore $t^2\in B$ and $t^2\m \subseteq \m$. This shows $2+a_i \in H$ for each $i=1, \ldots, \ell$.

Now assume $2+a_i \in H$ for each $i=1, \ldots, \ell$. Hence $t^2\m \subseteq \m$ and $t^2\in B$. Let $V$ and $\n$ denote $\overline{R}$ and the unique maximal ideal of $B$, respectively. Note that $\n V=xV$ for some $x\in \n$. Then, seeting $\q=(x)$, we have:
\begin{equation}\notag{}
\e(B)=e^0_{\n}(B)=\e^0_{\n}(V)=\e_{\q}^0(V)= \ell_B(V/\q V)= \ell_B(V/\n V).
\end{equation}
Here the third equality holds since $\q$ is a reduction of $\n$. Therefore it follows that:
\begin{equation}\notag{}
\e(B)=\ell_B(V/\n V)\leq \ell_B(V/t^2 V)=\ell_V(V/t^2 V)=2.
\end{equation}
Consequently, by Corollary \ref{3.4}, $R$ is almost Gorenstein and Arf.
\end{proof}

Our next result yields an extension of Theorem \ref{3.5}. 

\begin{prop}\label{3.7}
Assume $R$ is a numerical semigroup ring as in Theorem \ref{3.5}.  Assume further $R$ is a generalized Gorenstein ring. Then the following conditions are equivalent.
\begin{enumerate}[\rm(i)]
\item $R$ is an Arf ring.
\item $R$ has minimal multiplicity, $2+ \ell_R(R/\fkc)\cdot a_1 \in H$, and $2 + a_i \in H$ for each $i=2, \ldots, \ell$. 
\end{enumerate}
\end{prop}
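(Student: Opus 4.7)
The plan is to combine Theorem~\ref{intro1.2} with the explicit blow-up description furnished by Theorem~\ref{3.1}. Since every module-finite extension of $R$ inside $\overline{R}=k[\![t]\!]$ is generated over $k$ by monomials $t^n$, it is itself a numerical semigroup ring; in particular $S$ is local, and Theorem~\ref{intro1.2} thus reduces (i) to ``$R$ has minimal multiplicity and $\rme(S)\le 2$.'' The case $a_1\le 2$ is trivial: $R$ is either regular ($a_1=1$) or a Gorenstein hypersurface ($a_1=2$), so it is Arf by Proposition~\ref{3.2}, and $N=\ell_R(R/\fkc)=0$ makes the numerical conditions in (ii) hold automatically. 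I may therefore assume $a_1\ge 3$.

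Under the assumption that $R$ is generalized Gorenstein with minimal multiplicity (imposed in (ii) and forced by (i) via stability of $\m$), Theorem~\ref{3.1} gives $S=R_N$ with $N=\ell_R(R/\fkc)$, and $R_n$ is local of minimal multiplicity $a_1$ for every $n=0,\ldots,N-1$. Writing $H^{(n)}$ for the value semigroup of $R_n$ and using $\m_n^2=t^{a_1}\m_n$ to identify $R_{n+1}=t^{-a_1}\m_n$ for $n<N$, a short induction yields
\[
H^{(n)}=\{h\ge 0: h+na_1\in H\},\qquad n=0,1,\ldots,N,
\]
together with the fact that the smallest positive element of $H^{(n)}$ equals $a_1$ for each $n<N$. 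Consequently $H$ contains no element in any open interval $(ka_1,(k+1)a_1)$ with $0\le k<N$, and since each minimal generator $a_i$ with $i\ge 2$ is not a multiple of $a_1$, this forces $a_i>Na_1$. The multiplicity $\rme(S)$ equals the smallest positive element of $H^{(N)}$, so $\rme(S)\le 2$ iff $\{1,2\}\cap H^{(N)}\ne\emptyset$; since $1\in H^{(N)}$ forces $H^{(N)}=\mathbb{N}$ and hence $2\in H^{(N)}$, this reduces to $2+Na_1\in H$.

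With these preparations, the two implications are short. For (i) $\Rightarrow$ (ii), Arf gives minimal multiplicity and, via Theorem~\ref{intro1.2}, $\rme(S)\le 2$, hence $2+Na_1\in H$. For each $i\ge 2$, the integer $a_i-Na_1>0$ lies in $H^{(N)}$ because $(a_i-Na_1)+Na_1=a_i\in H$; adding $2\in H^{(N)}$ in the semigroup $H^{(N)}$ yields $(a_i-Na_1)+2\in H^{(N)}$, i.e., $a_i+2\in H$. For (ii) $\Rightarrow$ (i), the conditions of (ii) combined with the translation above give $\rme(S)\le 2$, and Theorem~\ref{intro1.2} delivers that $R$ is Arf.

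The principal obstacle is the second paragraph: one must justify both the explicit formula for $H^{(n)}$ when $n\le N$ and the inequality $a_i>Na_1$ for $i\ge 2$, via careful use of Theorem~\ref{3.1}. After that, the remaining work is pure semigroup arithmetic, and one sees in passing that the conditions ``$2+a_i\in H$ for $i=2,\ldots,\ell$'' in (ii) are in fact automatic consequences of minimal multiplicity and $2+Na_1\in H$ under the generalized Gorenstein hypothesis.
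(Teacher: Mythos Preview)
Your proof is correct and takes a genuinely different route from the paper. The paper descends to the penultimate blow-up $R_{N-1}$, notes that it is almost Gorenstein (because $S=R_N$ is Gorenstein by Theorem~\ref{3.1} and Theorem~\ref{2.4}), applies the Barucci--Fr\"oberg criterion Theorem~\ref{3.5} to $R_{N-1}$, and then translates the resulting conditions on $H_{N-1}$ back to $H$ via the equivalence $x\in H_{n}\Leftrightarrow x+na_1\in H$. You instead apply Theorem~\ref{intro1.2} directly: since $S$ is local, Arf reduces to ``minimal multiplicity and $\rme(S)\le 2$'', and the same semigroup translation identifies $\rme(S)\le 2$ with $2+Na_1\in H$. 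This bypasses Theorem~\ref{3.5} entirely and is a bit more economical; moreover it surfaces a point the paper does not make explicit, namely that under the generalized Gorenstein hypothesis the clauses $2+a_i\in H$ for $i\ge 2$ in (ii) are forced by minimal multiplicity together with $2+Na_1\in H$, hence redundant. One small caveat: your step ``each minimal generator $a_i$ with $i\ge 2$ is not a multiple of $a_1$, so $a_i>Na_1$'' tacitly assumes $\{a_1,\ldots,a_\ell\}$ is the \emph{minimal} generating set of $H$; the paper's proof makes the same tacit assumption when it writes $H_i=\langle a_1,\,a_2-ia_1,\ldots,\,a_\ell-ia_1\rangle$, so this is a feature of the statement rather than a defect of your argument.
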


\begin{proof} Recall that an Arf ring has minimal multiplicity. Hence we may assume $R$ has minimal multiplicity throughout the proof.

Assume $a_1=\e(R)\leq 2$. Then $R$ is Arf by Proposition \ref{3.2}. Hence (i) holds. Since $a_1\in H$, we see that $2+a_i\in H$ for all $i=1, \ldots, \ell$. Moreover, as $R$ is Gorenstein, $\ell_R(R/\fkc)=0$  so that $2+ \ell_R(R/\fkc)\cdot a_1 \in H$. In particular (ii) holds. Consequently we may assume $\e(R)\geq 3$. So it follows from Theorem \ref{3.1} that $S=R_{N}$ is Gorenstein and $R_i$ is a local ring with minimal multiplicity $\rme(R)$ for each $i=0, \ldots, N-1$, where $N= \ell_R(R/\fkc)$. 

Fix $i$ with $1\leq i \leq N-1$, and let $\m_{i}$ denote the unique maximal ideal of the local ring $R_{i}$. Then, since $\m_{i}$ is stable, we have $R_{i}=\m_{i-1}:_{\rmQ(R_i)}\m_{i-1}$; see Definition \ref{st}. (Here $\m_0=\m$).

Note, by Proposition \ref{2.2}, $R$ is Arf if and only if $R_{N-1}$ is Arf. Also, since $S=R_{N}=\m_{N-1}:_{\rmQ(R_N)}\m_{N-1}$ is Gorenstein, we see from Theorem \ref{2.4} that $R_{N-1}$ is almost Gorenstein. So $R$ is Arf if and only if $R_{N-1}$ is almost Gorenstein Arf.

The ring $R_{i}$, since it has minimal multiplicity, is a numerical semigroup ring; see \cite[Section 6]{CGKM}. More precisely, $R_i=k[\![H_{i}]\!]$, where $H_i$ is the numerical semigroup generated by $\{a_1, a_2-ia_1, \ldots, a_{\ell}-ia_1\}$. Therefore, by Theorem \ref{3.5}, for each $j=2, \ldots, N$,  we have:
\begin{equation}\tag{\ref{3.7}.1}
R \text{ is Arf if and only if } 2+a_1\in H_{N-1} \text{ and } a_j-(N-1) \cdot a_1+2 \in H_{N-1}.
\end{equation}

Now we claim that $x\in H_1$ if and only if $x+a_1\in H$ for each nonnegative integer $x$. First we proceed by assuming the claim.

It follows from the claim, for a nonnegative integer $x$, we have that: 
\begin{equation}\tag{\ref{3.7}.2}
x\in H_{N-1} \text{ if and only if } x+(N-1)a_1\in H.
\end{equation}
Now, letting $x=a_1+2$, we obtain from (\ref{3.7}.2) that:
\begin{equation}\tag{\ref{3.7}.3}
2+a_1\in H_{N-1} \text{ if and only if } 2+a_1 \cdot N=2+ \ell_R(R/\fkc)\cdot a_1  \in H
\end{equation}
Moreover, for each $j=2, \ldots N$, it follows from (\ref{3.7}.2) that:
\begin{equation}\tag{\ref{3.7}.4}
a_j-(N-1) \cdot a_1+2 \in H_{N-1} \text{ if and only if } [ a_j-(N-1) \cdot a_1+2 \in H_{N-1} ]+(N-1) \cdot a_1 \in H.
\end{equation}
Therefore, by (\ref{3.7}.1), (\ref{3.7}.3) and (\ref{3.7}.4), we conclude that $R$ is Arf if and only if
$2+ \ell_R(R/\fkc)\cdot a_1 \in H$, and $2 + a_j \in H$ for each $j=2, \ldots, \ell$. 

Next, to complete the proof, we justfiy the above claim. Let $x\in H_1$. Then $t^{x} \in  R_{1}=\m_{}:_{\rmQ(R)}\m_{}$, i.e., $t^{x}\m \subseteq \m$. This implies $t^{x+a_1} \in \m$, or equivalently, $x+a_1 \in H$. 

Now assume $x+a_1\in H$ for each nonnegative integer $x$. Since $R$ has minimal multiplicity, it follows that $\m^2=t^{a_1}\m$. Therefore,
\begin{equation}\tag{\ref{3.7}.5}
R_1=R^{\m}=R\left[ \frac{\m}{t^a_1}\right]=\bigcup_{i \geq 0} \frac{\m^{i}}{(t^{a_1})^{i}}= \frac{\m}{t^{a_1}}
\end{equation}
Here, in (\ref{3.7}.5), the second equality holds since $t^{a_1}$ is a reduction of $\m$; see the discussion preceding Definition \ref{st}. Moreover the fourth equality is due to the fact that 
$\displaystyle{\frac{\m^i} {(t^{a_1})^i}=\frac{\m}{t^{a_1}}}$ for each $i\geq 2$. Hence, since $x+a_1 \in H$, we have:
$$\displaystyle{t^x=\frac{t^{x+a_1}}{t^{a_1}} \in \frac{\m}{t^{a_1}}=R_1=\m:_{\rmQ(R)}\m}$$ This implies that $x\in H_1$.
\end{proof}

We are now ready to construct several examples. The rings in Examples \ref{ornek3} and \ref{ornek4} are almost Gorenstein Arf rings. On the other hand, the one in Example \ref{ornek5} is a generalized Gorenstein Arf ring which is not almost Gorenstein.

\begin{ex} \label{ornek3} Let $k$ be a field, $e\geq 2$ an integer, and $R=k[\![t^e, t^{e+1}, \ldots, t^{2e-1}]\!]$ be the numerical semigroup ring. Since the conductor of the corresponding semigroup is $e$, we see $(e+i)+2 \in H$ for all $i=0, \ldots, e-1$. Therefore, by Theorem \ref{3.5}, $R$ is an almost Gorenstein Arf ring. 

Notice $\m^2=t^e\m$. So it follows that $$B=\m:_{\rmQ(R)}\m=R\left[ \frac{\m}{t^e}\right]=k[\![t,t^2, \ldots, t^{e-1}]\!]=k[\![t]\!]=\overline{R}.$$ Hence $\e(B)=1\leq 2$, cf., Theorem \ref{3.4}.  
\qedhere
\pushQED{\qed} 
\popQED	
\end{ex}
%$V = k[[t]]$ the formal power series ring. $R=k[\![t^{e + i} \mid i=0, \ldots, e-1]\!]
%Let $e \ge 2$ be an integer. Then $R = k[[t^{e + i} \mid 0 \le i \le e-1]]$ is an almost Gorenstein Arf ring with $\m:\m = V$. 

\begin{ex}\label{ornek4} Let $k$ be a field, $e\geq 3$ an integer, and $R=k[\![t^e, t^{e+2}, \ldots, t^{2e-1}, t^{2e+1}]\!]$ be the numerical semigroup ring. Since the conductor of the corresponding semigroup is $e+2$, we see that $2+e$, $2+(2e+1)$, as well as $2+(e+i)$, for all $i=2, \ldots, e-1$, belong to $H$. Therefore, by Theorem \ref{3.5}, $R$ is an almost Gorenstein Arf ring. 

Notice $\m^2=t^e\m$. So it follows that $$B=\m:_{\rmQ(R)}\m=R\left[ \frac{\m}{t^e}\right]=k[\![1, t^2, t^3, \ldots, t^{e-1},t^{e+1}]\!]=k[\![t^2,t^3]\!].$$ Hence $\e(B)=2\leq 2$, cf., Theorem \ref{3.4}.  
\qedhere
\pushQED{\qed} 
\popQED	
\end{ex}

\begin{ex} \label{ornek5} Let $k$ be a field and let $R=k[\![t^5, t^{16}, t^{17}, t^{18}, t^{19}]\!]$ be the numerical semigroup ring. It was proved in \cite[4.27]{GK} that $R$ is a generalized Gorenstein ring.

The canonical module $\rmK_R$ of $R$ is $R+Rt+Rt^2+Rt^3$. As $t\in \rmK_R$, we have that $k[\![t]\!]\subseteq R[\rmK_R]=S$. Since the conductor of the corresponding semigroup $H$ is $15$, setting $V=\overline{R}$, we see that $\fkc = R:_{\rmQ(R)}S=(tV)^{15}$. Therefore we have $\ell_R(R/\fkc)=\ell_R(V/\fkc)-\ell_R(V/R)=15-12=3$. This implies $2+\ell_R(R/\fkc)\cdot a_1 =17 \in H$. Moreover $2 + a_i \in\{18, 19, 20, 21\} \subseteq H$ for each $i=2, \ldots, 5$. So $R$ is Arf by Proposition \ref{3.7}. Note $\m \rmK_R \nsubseteq R$ since $t^5 \cdot t \in \m \rmK_R -R$. Consequently, $R$ is not almost Gorenstein; see \cite[3.11]{GMP}.
\qedhere
\pushQED{\qed} 
\popQED	
\end{ex}

Next we move to another application somewhat different in nature.

%%%%%%%%%%%%%%%%%%%%%%%%%%%%%%%%%%%%%%%%%%%%%%%%%%%%%%%%%%%%%%%%%%%%%%%%%%%%%%%%%%%%%%%%%%%%%%%%%%%%%%%%%%%%%%%%%%%%%%%%%%%%%%%%%%%%%%%%%%%%%%%%%%%%%%%%%%%%%%%%%%%%%%%%%%%%%%Almost Gorenstein Arf rings obtained by idealization
%%%%%%%%%%%%%%%%%%%%%%%%%%%%%%%%%%%%%%%%%%%%%%%%%%%%%%%%%%%%%%%%%%%%%%%%%%%%%%%%%%%%%%%%%%%%%%%%%%%%%%%%%%%%%%%%%%%%%%%%%%%%

\subsection*{An application of Theorem \ref{intro1.2} on a certain idealization} 
The aim of this subsection is to give a necessary and sufficient condition for the ring $A = R \ltimes \fkc$ to be generalized Gorenstein and Arf, where $A = R \ltimes \fkc$ is the idealization of $\fkc = R:_{\rmQ(R)}S$ (Recall we follow the notations of Setup \ref{2.3}). Note that, when $R$ is a generalized Gorenstein, $A$ is always generalized Gorenstein; see \cite[4.15]{GK}. %By \cite[Theorem 4.15]{GK}, $A$ is a generalized Gorenstein ring. 

\begin{thm}\label{4.1} Assume $R$ is a generalized Gorenstein ring and set $A = R \ltimes \fkc$, where  $\fkc = R:_{\rmQ(R)}S$.  Then the following conditions are equivalent.
\begin{enumerate}[\rm(i)]
\item $A$  is an Arf ring.
\item $R$ has minimal multiplicity and $S = \overline{R}$.
\end{enumerate}
Moreover, if (i) or (ii) holds, then $R$ is an Arf ring. 
\end{thm}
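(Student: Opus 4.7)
My plan is to reduce Theorem~\ref{4.1} to a double application of Theorem~\ref{intro1.2}, once for $R$ and once for $A$. Since $R$ is generalized Gorenstein, \cite[4.15]{GK} gives that $A$ is generalized Gorenstein too and supplies an explicit canonical ideal of $A$ together with a parameter reduction; write $S_A$ for the associated ring $A[I_A/a_A]\subseteq \rmQ(A)$. Then Theorem~\ref{intro1.2} applies verbatim to $A$, and $A$ is Arf if and only if $A$ has minimal multiplicity and $\e((S_A)_{\calM})\le 2$ for every maximal ideal $\calM$ of $S_A$. The task is to translate each of these two conditions back to conditions on $R$, $\fkc$, and $S$.

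For the multiplicity condition, I would use that $\fkc$ is a rank-one maximal Cohen--Macaulay $R$-module, so $\e(A)=2\,\e(R)$, while the maximal ideal $\m_A=\m\oplus\fkc$ of $A$ yields $\ed(A)=\ed(R)+\mu_R(\fkc)$. Combined with the standard bounds $\ed(R)\le\e(R)$ and $\mu_R(\fkc)\le \e(R)$ (the latter from the rank-one MCM bound), the equality $\ed(A)=\e(A)$ forces both $\ed(R)=\e(R)$ and $\mu_R(\fkc)=\e(R)$, and the converse is immediate. For the $S_A$-condition I expect \cite[4.15]{GK} to give an identification of $S_A$ that induces a natural bijection $\Max(S_A)\leftrightarrow\Max(S)$ together with a controlled formula of the form $\e((S_A)_{\calM})=2\,\e(S_{\calM'})$ where $\calM'$ corresponds to $\calM$. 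Under such a formula, the requirement $\e((S_A)_{\calM})\le 2$ becomes $\e(S_{\calM'})\le 1$ for every $\calM'\in\Max(S)$, i.e., every localization of $S$ at a maximal ideal is a DVR; since $S$ is semi-local and integral over $R$ with $\rmQ(S)=\rmQ(R)$, this is equivalent to $S=\overline{R}$.

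Putting the two translations together gives (i)$\Leftrightarrow$(ii) at once. The ``moreover'' then follows from Theorem~\ref{intro1.2} applied directly to $R$: when $R$ has minimal multiplicity and $S=\overline{R}$, each $\e(S_\calM)$ equals $1\le 2$, so $R$ is Arf. I expect the main obstacle to lie in the second step, namely in pinning down the canonical data of $A$ from \cite[4.15]{GK}, computing $S_A$ explicitly in terms of $S$ and $\fkc$, and verifying the predicted multiplicity formula for $(S_A)_{\calM}$. Once this identification is nailed down, both implications reduce cleanly to Theorem~\ref{intro1.2}, with Theorem~\ref{3.1} operating quietly in the background.
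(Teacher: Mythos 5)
Your overall strategy is the same as the paper's: apply Theorem~\ref{intro1.2} to $A$, identify the ring $S_A$ attached to $A$, and translate the two conditions back to $R$. Your guess for the identification is in fact correct --- the paper shows $\rmK_A \cong S\times K$, so $S_A=A[S\times K]=S\ltimes S$ with $\Max(S\ltimes S)=\{\calM\ltimes S\}$ and $(S\ltimes S)_{\calM\ltimes S}=S_\calM\ltimes S_\calM$ of multiplicity $2\,\e(S_\calM)$, exactly the formula you predicted --- but you leave this step as an expectation rather than proving it, and it is the load-bearing computation.

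The genuine gap is in your treatment of minimal multiplicity. Your numerical argument correctly shows that $A$ has minimal multiplicity if and only if $\ed(R)=\e(R)$ \emph{and} $\mu_R(\fkc)=\e(R)$. But then the two translations combined give
(i) $\Leftrightarrow$ [$R$ has minimal multiplicity, $\mu_R(\fkc)=\e(R)$, and $S=\overline{R}$],
which is not condition (ii): you must still show that (ii) forces $\mu_R(\fkc)=\e(R)$, i.e., that $\fkc$ is an Ulrich module, and this is not ``immediate'' --- for instance $\fkc=R$ has $\mu_R(\fkc)=1<\e(R)$ whenever $R$ is Gorenstein of multiplicity $2$. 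This is precisely where the paper does real work in the direction (ii)~$\Rightarrow$~(i): choosing $\alpha$ with $\m^2=\alpha\m$, it observes that $\fkc=R:_{\rmQ(R)}S$ is an ideal of $S$, hence of $B=R[\frac{\m}{\alpha}]$, so that $\frac{\m}{\alpha}\fkc\subseteq\fkc$, i.e., $\m\fkc=\alpha\fkc$; this yields $\n^2=(\alpha,0)\n$ for $\n=\m\times\fkc$ directly (equivalently, $\mu_R(\fkc)=\ell_R(\fkc/\alpha\fkc)=\e(R)$). Without this observation --- which uses that $\fkc$ is the conductor and not merely some rank-one maximal Cohen--Macaulay ideal --- your implication (ii)~$\Rightarrow$~(i) does not close. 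The rest (the direction (i)~$\Rightarrow$~(ii) via projection, and the ``moreover'' statement from Theorem~\ref{intro1.2} applied to $R$ with $\e(S_\calM)=1$) is fine and matches the paper.
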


\begin{proof} We start by noting that $K_A \cong \Hom_R(\fkc, K) \times K \cong S \times K$; see \cite[Section 4]{GK}. Set $L= S \times K$. Then it follows that $A[L] = L^2 = S \times S$; see \cite[4.14]{GK}. Notice $A$ is a generalized Gorenstein ring; this follows from \cite[4.15]{GK} in case $A$ is not Gorenstein. Note also that $\n=\m \times \fkc$ is the unique maximal ideal of $A$

$(1) \Rightarrow (2)$ Assume $A$ is an Arf ring. Then, by Theorem \ref{intro1.2}, $\e(A[L]_{\mathcal{N}})\leq 2$ for each maximal ideal $\mathcal{N}$ of $A[L]$. Thus $2 \cdot \e(S_{\M})=\e(S_{\M} \times S_{\M})\leq 2$, i.e., $\e(S_{\M})\leq 1$. So $S$ is a (semi-local) regular ring, and this implies that $S=\overline{R}$ (recall $R \subseteq S \subseteq \overline{R}$).

It follows, since $A$ has minimal multiplicity, that $\n^2=\zeta \n$. Setting $\zeta=(\alpha, x)$ with $\alpha \in \m$ and $x\in \fkc$, and using the natural projection $p : A\to R, ~(\alpha, x) \mapsto \alpha$, we deduce that $\m^2=\alpha \m$, i.e., $R$ has minimal multiplicity. In particular, by Theorem \ref{intro1.2}, we see that $R$ is an Arf ring.

$(2) \Rightarrow (1)$ Assume $R$ has minimal multiplicity and $S = \overline{R}$. Notice, for each maximal ideal $\M$ of $S$, it follows that $\e(S_{\M}\times S_{\M})=2$ since $S_{\M}$ is regular.
Therefore it suffices to prove $A$ has minimal multiplicity; see Theorem \ref{intro1.2} and recall $A[L] = S \times S$.  

As $R$ has minimal multiplicity, we can pick an element $\alpha \in \m$ such that $\m^2 = \alpha \m$. Then, since $\m$ is stable, we have $c \subseteq R \subseteq R[\frac{\m}{\alpha}]=R^{\m}=m :_{\rmQ(R)} \m =B$. Thus $c$ is an ideal of $B$. As $R[\frac{\m}{\alpha}]=\frac{\m}{\alpha} \in B$, it follows that $\frac{\m}{\alpha} \cdot \fkc \subseteq \fkc$, i.e., $\m \cdot \fkc = \alpha \cdot \fkc$. Therefore we have $\n^2=\m^2 \times \m \fkc=\alpha \m \times \alpha \m= (\alpha, 0)\cdot (\m \times \fkc)$, i.e., $A$ has minimal multiplicity. Now, by Theorem \ref{intro1.2}, $A$ is Arf.
%Note that $R$ is analytically unramified, since $S$ is a module-finite extension over $R$. For every $M \in \Max S$, $S_M$ is a discrete valuation ring, whence 
%$\e(S_M \times S_M) = 2$. Let us take $\alpha \in \m$ such that $\m^2 = \alpha \m$. We then have
%$$
%\c \subseteq R \subseteq \frac{\m}{\alpha} = \m : \m =B \subseteq \overline{R} = S 
%$$
%which yields that $\c$ is an ideal of $B$. Since $\m \c = \alpha \c$, we have
%$$
%(\m \times \c)^2 = \m^2 \times \m \c = (\alpha, 0)\cdot (\m \times \c).
%$$
%Hence $A$ is an Arf ring.
\end{proof}

Here is a consequence of Theorem \ref{4.1} that gives a useful criterion for the idealization $R \ltimes \m$ to be almost Gorenstein Arf. Recall, if $R$ is an almost Gorenstein ring that is not Gorenstein, then $\fkc = R:_{\rmQ(R)}S=\m$; see \ref{2.3} and \cite[3.16]{GMP}.

\begin{cor}\label{3} $R \ltimes \m$ is an almost Gorenstein Arf ring if and only if $\m \overline{R} \subseteq R$.
%When this is the case, $\m:\m = \overline{R}$. 
\end{cor}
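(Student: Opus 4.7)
The proof proceeds by reducing to Theorem \ref{4.1}, exploiting the fact, noted just before the statement, that if $R$ is almost Gorenstein and not Gorenstein then the conductor $\fkc = R :_{\rmQ(R)} S$ is precisely $\m$. Under this identification, the idealization $R \ltimes \m$ coincides with $R \ltimes \fkc$, and Theorem \ref{4.1} characterizes its Arf property by the conditions that $R$ has minimal multiplicity and that $S = \overline{R}$; the latter translates into $\m \overline{R} = \fkc \cdot S \subseteq R$ via the definition of the conductor.

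For the forward direction, the plan is to assume $R \ltimes \m$ is almost Gorenstein and Arf and first argue that $R$ itself must be almost Gorenstein and non-Gorenstein (the degenerate cases in which $R$ is Gorenstein or regular can be checked directly from the minimal-multiplicity consequence of the Arf property for $R \ltimes \m$). Granted this, $\fkc = \m$ by \cite[3.16]{GMP}, so Theorem \ref{4.1} applies to $A = R \ltimes \m$, producing $S = \overline{R}$. Then $\m \overline{R} = \fkc\cdot S \subseteq R$ from the definition of $\fkc$.

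For the reverse direction, assume $\m \overline{R} \subseteq R$. Using $R \subseteq K \subseteq \overline{R}$ from Setup \ref{2.3}, one obtains $\m K \subseteq R$, whence $R$ is almost Gorenstein by \cite[3.11]{GMP}. After dispatching the Gorenstein case directly, assume $R$ is not Gorenstein, so that $\fkc = \m$. I would then verify the two hypotheses of Theorem \ref{4.1}: first, $R$ has minimal multiplicity, because $\m \overline{R} \subseteq R$ forces $\overline{R} \subseteq R :_{\rmQ(R)} \m = B$ for non-regular $R$, yielding $B = \overline{R}$; hence $\m$ is an ideal of the semi-local Dedekind ring $\overline{R}$, forcing $\m \overline{R}$ to be principal and consequently $\m^2 = x \m$; second, $S = \overline{R}$, which for non-Gorenstein almost Gorenstein $R$ follows from $B = S$ via \cite[3.16]{GMP}. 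Theorem \ref{4.1} then yields that $R \ltimes \m$ is Arf, while its almost Gorenstein property follows from the structural result \cite[4.15]{GK}.

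The main obstacle is the transition between \emph{generalized Gorenstein}, in which Theorem \ref{4.1} is phrased, and \emph{almost Gorenstein}, in which the corollary is phrased, both for $R$ itself and for its idealization. In particular, in the forward direction one must verify that the almost Gorensteinness of the idealization forces that of $R$, and in the reverse direction one must ensure the idealization upgrades from generalized Gorenstein to almost Gorenstein; both are expected to follow from the characterizations in \cite{GK} and \cite{GMP} employed throughout the paper.
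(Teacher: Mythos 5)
Your reduction to Theorem \ref{4.1} only applies when $\fkc=\m$, i.e., when $R$ is almost Gorenstein but \emph{not} Gorenstein. When $R$ is Gorenstein one has $S=R$ and $\fkc=R$, so $R\ltimes\fkc$ is not the ring $R\ltimes\m$ that the corollary is about, and Theorem \ref{4.1} gives you nothing. You set this aside as a degenerate case to be "checked directly from the minimal-multiplicity consequence of the Arf property," but that check fails. Take $R=k[\![t^2,t^5]\!]$: here $R$ is Gorenstein with $\e(R)=2$, the idealization $A=R\ltimes\m$ is almost Gorenstein by \cite[6.5]{GMP} and has minimal multiplicity (its maximal ideal $\n=\m\times\m$ satisfies $\n^2=(t^2,0)\n$ since $\m^2=t^2\m$), and yet $t^3\in\m\overline{R}\setminus R$, so the desired conclusion $\m\overline{R}\subseteq R$ is false. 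Consistently, $A$ is not Arf: its blowup at $\n$ is $B\ltimes B$ with $B=\m:_{\rmQ(R)}\m=k[\![t^2,t^3]\!]$, which does not have minimal multiplicity. So in the Gorenstein case you must exploit the Arf property of $R\ltimes\m$ well beyond minimal multiplicity, and with Theorem \ref{4.1} unavailable your proposal has no mechanism for doing so. The symmetric problem occurs in the converse: "dispatching the Gorenstein case directly" (e.g., $R=k[\![t^2,t^3]\!]$, where $\m\overline{R}\subseteq R$ and you must actually prove $R\ltimes\m$ is Arf) is left without an argument. This is a genuine gap, not a routine verification.

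For comparison, the paper never routes through Theorem \ref{4.1}. It applies Corollary \ref{3.4} directly to $A=R\ltimes\m$, using the computation $\n:_{\rmQ(A)}\n=B\ltimes B$ where $B=\m:_{\rmQ(R)}\m$; since the localizations of $B\ltimes B$ at maximal ideals have multiplicity $2\,\e(B_{\mathcal{M}})$, the criterion of Corollary \ref{3.4} forces $\e(B_{\mathcal{M}})=1$ for all $\mathcal{M}$, i.e., $B$ is regular and hence $B=\overline{R}$, which is equivalent to $\m\overline{R}\subseteq R$; the converse runs the same computation backwards. This treats the Gorenstein and non-Gorenstein cases uniformly. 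Your non-degenerate case (almost Gorenstein, not Gorenstein) does go through roughly as you describe, provided you also cite \cite[6.5]{GMP} to transfer almost Gorensteinness from $R\ltimes\m$ down to $R$ in the forward direction, but as written the proposal does not prove the corollary.
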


\begin{proof} Let $A=R \ltimes \m$ and $\n=\m \times \m$.

Assume first $A$ is an almost Gorenstein Arf ring. Then, by Theorem \ref{2.4},  the endomorphism algebra $\n :_{\rmQ(A)}\n$ is Gorenstein, where $\n=\m \times \m$. Let $\mathcal{N}$ be a maximal ideal of $B=\m:_{\rmQ(R)}\m$. Then, since $\n :_{\rmQ(A)}\n= B\times B$, and $\displaystyle{(\n :_{\rmQ(A)}\n) /(\mathcal{N} \times B)=B/\n}$, we see $\mathcal{N} \times B$ is a maximal ideal of $\n :_{\rmQ(A)}\n$. Moreover, the localization $\displaystyle{(\n :_{\rmQ(A)}\n)_{\mathcal{N} \times B}=B_{\n} \times B_{\n}}$ has multiplicity two; see Corollary \ref{3.4}. Therefore $B$ is regular, and $B=\overline{R}$.

Conversely, assume $\m \overline{R} \subseteq R$. If $R$ is regular, then $\m \cong R \cong \K_R$ so that $R \ltimes \m$ is Gorenstein \cite{Reiten}. So we may assume $R \neq \overline{R}$. Then $\m\overline{R} \neq R$. Since $\overline{R} \subseteq  \m:_{\rmQ(R)}\m=B$, we have $B=\overline{R}$. 
Then it follows that $\n :_{\rmQ(A)}\n = B \times B$. 
In particular, $\e((\n :_{\rmQ(A)}\n)_{\N})=2$ for each maximal ideal $\N$ of $B \times B$. So, by Corollary \ref{3.4}, we conclude $A$ is an almost Gorenstein Arf ring.
%We may assume $R$ is not a regular local ring. Then $\overline{R} = \m:\m$, so that $\n :\n = \overline{R}\ltimes \overline{R}$. Hence $\e((\n :\n)_N)=2$ for every $N \in \Max (\n:\n)$. Therefore, $R:\m$ is an almost Gorenstein Arf ring by Corollary \ref{3.4}.
%
%Hence we may assume $R$ is not Gorenstein. Then $R \ltimes \m=R \ltimes \fkc$.Assume first $R \ltimes \m$ is an almost Gorenstein Arf ring. If $R \ltimes \m$ is Gorenstein, then it follows \cite[6.6]{GMP} that $R$ is regular and hence $\m \overline{R}=\m R \subseteq R$. So assume Then it follows from Theorem \ref{4.1} that $R$ has minimal multiplicity and $S = \overline{R}$. Therefore \cite[3.11]{GMP} yields $\m \overline{R} \subseteq R$. \todo[inline]{HOW Theorem \ref{4.1} applies?} 
\end{proof}

Here is an application of Corollary \ref{3}.

\begin{ex} \label{son1} Let $(S, \n)$ be a regular local ring of dimension $d$ with $d\geq 3$. Set $R=S/I$ where $\displaystyle{I=\bigcap\limits^d_{i=1} (x_1, \ldots, \widehat{x_i}, \ldots, x_d})$, where $\widehat{x_i}$ is used to remove $x_i$ from the ideal. Then $R$ is an almost Gorenstein local ring with $B=\overline{R}$; see \cite[5.3]{GMP}. Since $B$ is Gorenstein, it follows from Theorem \ref{2.4} that $R$ has minimal multiplicity. Hence the maximal ideal $\m$ of $R$ is stable, so that $R_1=\overline{R}$. This implies all the blowups of $R$ equal to $\overline{R}$, and have multiplicity one. Now, by Proposition \ref{3.2}, $R$ is Arf. Since $\m \overline{R}=\m B \subseteq \m$, we conclude that $R \ltimes \m$ is an almost Gorenstein Arf ring.

%In particular, as a special case, we see that the ring $R \ltimes \m$ is an almost Gorenstein Arf ring, where $R = k[\![ x,y,z]\!]/[(x,y) \cap (y,z) \cap (z,x)]$. 
\end{ex}

In the next example we find out an idealization ring of the form $R \ltimes \m$ which is almost Gorenstein, but neither $R$ nor $R \ltimes \m$ is Arf.

\begin{ex} \label{son2} Let $R=k[[t^4, t^5, t^6]]$. Then 
$
R \ltimes \m \cong k[[X, Y, Z, U, V, W]]/I, %k[[t^4, t^5, t^6]] \ltimes (t^4, t^5, t^6)
$
where $I$ is the sum of the ideals $(YU-XV, ZU-XW, ZU-YV, ZV-YW, X^2U-ZW)$, 
$(X^3-Z^2, Y^2-ZX)$ and $(U, V, W)^2$.
Since $R$ is Gorenstein, we know from \cite[6.5]{GMP} that $R \ltimes \m$ is almost Gorenstein. Furthermore, since $t^7 \in \m \overline{R} \nsubseteq R$, we conclude from Corollary \ref{3} that $R \ltimes \m$ is not an Arf ring.
\end{ex}
%(X^3-Z^2, Y^2-ZX) + (U, V, W)^2 + (YU-XV, ZU-XW, ZU-YV, ZV-YW, X^2U-ZW).

%Let $k$ be an infinite field. We consider \vspace{-0.5em} 
%$$
%A=k[[X, Y, Z, U, V, W]]/I
%$$
%where \vspace{-0.5em}{\scriptsize 
%$$
%I=(X^3-Z^2, Y^2-ZX) + (U, V, W)^2 + (YU-XV, ZU-XW, ZU-YV, ZV-YW, X^2U-ZW).
%$$}

\section*{Acknowledgements}

Part of this work was completed when Celikbas visited the Meiji University in May and June 2017, and Taniguchi visited West Virginia University in February and March 2018. Taniguchi, partially supported by JSPS Grant-in-Aid for Young Scientists (B) 17K14176, is grateful for the kind hospitality of the WVU Department of Mathematics. Celikbas, supported by the Japan Society for the Promotion of Science (JSPS) Grant-in-Aid for Scientific Research (C) 26400054, are grateful for the kind hospitality of the Meiji Department of Mathematics.

Goto was partially supported by JSPS Grant-in-Aid for Scientific Research (C) 16K05112.

%%%%%%%%%%%%%%%%%%%%%%%%%%%%%%%%%%%%%%%%%%%%%%%%%%%%%%%%%%%%
%\addcontentsline{toc}{section}{references}

\end{document}